\documentclass[envcountset]{llncs}
%\usepackage{latexsym}
%\usepackage{amsmath}
%\usepackage{amssymb}
%\usepackage{enumerate}
%\usepackage{amsthm} 
%\usepackage{graphicx} 
%\usepackage{amssymb,latexsym,times}
%\usepackage{proof}

%%HUOM
\usepackage{mathtools}

\usepackage{tikz}
\usetikzlibrary{arrows}
\usepackage{amsmath}
\usepackage{amssymb}
\usepackage{caption}
\usepackage{float}

\newcommand{\tuple}[1]{\vec{#1}}
\newcommand {\indep}[3] {#2 ~\bot_{#1}~ #3}
\newcommand {\indepc}[2] {#1 ~\bot~ #2}

\newcommand{\Dom}{\textrm{Dom}}

\newcommand{\Fr}{\textrm{Fr}}

\newcommand{\C}{\mathbb{C}}
\newcommand{\Q}{\mathbb{Q}}

\newcommand{\Po}{\mathcal{P}}

\newcommand{\M}{\mathfrak{M}}
\newcommand{\A}{\mathfrak{A}}

\newcommand{\F}{\mathcal{F}}
\newcommand{\on}{\exists}
\newcommand{\ja}{\wedge}
\newcommand{\tai}{\vee}

\renewcommand{\a}{\alpha}
\renewcommand{\b}{\beta}

\newcommand{\e}{\varepsilon}

\def\dep{=\!\!}

%%%%%%%%% Juha added
\newcommand{\ESOarity}[1]{{\rm ESO}({#1}\mbox{\rm-ary})}

\newcommand{\ESOfvar}[1]{{\rm ESO}_f({#1}\forall)}

\newcommand{\PTIME}{{\rm PTIME}}
%%%%%%%%%%%%%%
%%%%%%%%%Miika added
\newcommand{\deplogic}{\FO (\dep(\ldots ))}
\newcommand{\deparity}[1]{\deplogic ({#1}\mbox{\rm-dep})}
\newcommand{\depforall}[1]{\deplogic ({#1}\forall)}
\newcommand{\ind}[1]{\FO (\bot_{\rm c})({#1}\mbox{\rm-ind})}

\newcommand{\inc}[1]{\FO (\subseteq)({#1}\mbox{\rm-inc})}
\newcommand{\incforall}[1]{\FO (\subseteq) ({#1}\forall)}
\newcommand{\indforall}[1]{\FO (\bot_{\rm c}) ({#1}\forall)}
\newcommand{\indNRforall}[1]{\FO (\bot) ({#1}\forall)}

\newcommand{\indNRincforall}[1]{\FO (\bot,\subseteq) ({#1}\forall)}
\newcommand{\indlogic}{\FO (\bot_{\rm c})}
\newcommand{\inclogic}{\FO (\subseteq)}
\newcommand{\indNRlogic}{\FO (\bot)}
\newcommand{\indNRinclogic}{\FO (\bot,\subseteq)}

\newcommand{\sub}{\subseteq}
\newcommand{\EDGE}[1] {\textrm{E{\small DGE}}_{#1}}
\newcommand{\Edge}[2] {\textrm{E{\small DGE}}_k(#1, #2)}
\newcommand{\auto}[1]{{\rm auto}{(#1)}}
\newcommand{\row}[1]{{\rm row}{(#1)}}

\newcommand{\mi}[1]{{\rm mid}{(#1)}}
\newcommand{\swap}[1]{{\rm swap}{(#1)}}
\newcommand{\atomss}{\sub, \dep(\ldots),\bot_{\rm c}, \bot}
\newcommand{\atoms}{\sub, \dep(\ldots),\bot_{\rm c}}
\newcommand{\Lo}{\mathcal{L}}
\renewcommand{\C}{\mathcal{C}}
\newcommand{\LFP}{\rm LFP}
\newcommand{\GFP}{\rm GFP}
\newcommand{\IFP}{\rm IFP}
\newcommand{\TC}{\rm TC}
\newcommand{\PFP}{\rm PFP}
\newcommand{\LOGSPACE}{\rm LOGSPACE}
\newcommand{\PSPACE}{\rm PSPACE}
\newcommand{\PFPary}[1]{{\PFP}^{#1}}
\newcommand{\IFPary}[1]{{\IFP}^{#1}}
\newcommand{\LFPary}[1]{{\LFP}^{#1}}
\newcommand{\TCary}[1]{{\TC}^{#1}}
%%%%%%%%%%%%%
%%%%%%%%%%%%%
\def\dep{=\!\!}
\newcommand{\FO}{{\rm FO}}

\newcommand{\ESO}{{\rm ESO}}
%\theoremstyle{plain}
%\newtheorem{thm}[equation]{Theorem}
%\newtheorem{lem}[equation]{Lemma}
%\newtheorem{prop}[equation]{Proposition}
%\newtheorem{cor}[equation]{Corollary}
%\newtheorem*{question}{Question}
%%%%%%%%%%%%%%%%%

%\theoremstyle{plain}

%\theoremstyle{definition}
%\newtheorem{maa}[equation]{Definition}
%\newtheorem{konj}[equation]{Conjencture}
%\newtheorem{esim}[equation]{Example}

\pagestyle{plain}
\setcounter{page}{1}
%\addtolength{\hoffset}{-1.15cm}
%\addtolength{\textwidth}{2.3cm}
%\addtolength{\voffset}{0.45cm}
%\addtolength{\textheight}{-0.9cm}

%LISÄÄ NÄMÄKIN IEEE
\tikzstyle{my loop}=[->,to path={
  .. controls +(315:0.8) and +(225:0.8) .. (\tikztotarget) \tikztonodes}]
\tikzstyle{my loops}=[->,to path={
  .. controls +(45:0.8) and +(135:0.8) .. (\tikztotarget) \tikztonodes}]
\tikzstyle{my loopss}=[->,to path={
  .. controls +(45:0.8) and +(315:0.8) .. (\tikztotarget) \tikztonodes}]

\begin{document}

\author{Miika Hannula\thanks{Department of Mathematics and Statistics, University of Helsinki, Finland. \texttt{miika.hannula@helsinki.fi}}}
\institute{University of Helsinki}
\title{Hierarchies in inclusion logic  with lax semantics\thanks{The author was supported by grant 264917 of the Academy of Finland.}}

\maketitle

\begin{abstract}
We study the expressive power of fragments of inclusion logic under the so-called lax team semantics. The fragments are defined either by restricting the number of universal quantifiers or the arity of inclusion atoms in formulae. In  case of universal quantifiers, the corresponding hierarchy collapses at the first level. Arity hierarchy is shown to be strict by relating the question to the study of arity hierarchies in fixed point logics.
\end{abstract}

\section{Introduction}
In this article we study the expressive power of inclusion logic ($\inclogic$) \cite{galliani12} in the lax team semantics setting. Inclusion logic is a variant of dependence logic ($\deplogic$) \cite{vaananen07} which extends first-order logic with dependence atoms
$$\dep(x_1, \ldots ,x_n)$$
expressing that the values of $x_n$ depend functionally on the values of $x_1, \ldots ,x_n$. Inclusion logic, instead, extends first-order logic with inclusion atoms 
$$\tuple x \sub \tuple y$$ 
which express that the set of values of $\tuple x$ is included in the set of the values of $\tuple y$. We study the expressive power of two syntactic fragments of inclusion logic under the lax team semantics. These two fragments, $\incforall{k}$ and $\inc{k}$, are defined by restricting the number of universal quantifiers or the arity of inclusion atom to $k$, respectively. We will show that $\incforall{k}$ captures $\inclogic$ already with $k=1$ and that the fragments $\inc{k}$ give rise to an infinite expressivity hierarchy.

Since the introduction of dependence logic in 2007, many interesting variants of it have been introduced. One reason for this orientation is the semantical framework that is being used. Team semantics, introduced by Hodges in 1997 \cite{hodges97}, provides a natural  way to extend first-order logic with many different kind of dependency notions. Although many of these notions have been  extensively studied in database theory since the 70s,  with team semantics the novelty comes from the fact that  also interpretations for logical connectives and quantifiers are provided.% to be used alongside these already known dependency notions of database theory. %some of those perhaps unknown in the database community.

In expressive power $\deplogic$ is equivalent to existential second-order logic ($\ESO$) \cite{vaananen07}. For some variants of $\deplogic$, the correspondence to  $\ESO$ does not hold or it can depend on which version of team semantics is being used. For instance, $\inclogic$ corresponds in expressive power to $\ESO$ if we use the so-called strict team semantics \cite{galhankon13}. Under the lax team semantics, $\inclogic$ corresponds to greatest fixed point logic ($\GFP$) \cite{gallhella13} which captures $\PTIME$ over finite ordered models. In $\deplogic$  no separation between the strict and the lax version of team semantics exists since dependence atoms satisfy the so-called downvard closure property. In the following we briefly list some complexity theoretical aspects of $\deplogic$ and its variants.
\begin{itemize}
\item $\deplogic$ extended with the so-called intuitionistic implication $\rightarrow$ (introduced in \cite{abramsky09}) increases the expressive power of $\deplogic$ to full second-order logic \cite{yang13}.
\item The model checking problem of $\deplogic$, and many of its variants, was recently shown to be $\rm NEXPTIME$-complete. Moreover, for any variant of $\deplogic$ whose atoms are $\PTIME$-computable, the corresponding model checking problem is contained in $\rm NEXPTIME$ \cite{gradel13}.
\item The non-classical interpretation of disjunction in $\deplogic$ has the effect that the model checking problems of $\phi_1 := \hspace{2mm}\dep(x,y) \hspace{1mm} \tai \dep(u,v)$ and $\phi_2 :=\hspace{2mm} \dep(x,y) \hspace{1mm} \tai \dep(u,v) \hspace{1mm}\tai \dep(u,v)$ are already $\rm NL$-complete and $\rm NP$-complete, respectively \cite{kontinen_ja10}.
\end{itemize}
This article pursues the line of study taken in \cite{durand12} and \cite{galhankon13} where syntactical fragments of dependence and independence logic ($\indlogic$) were investigated, respectively. $\indlogic$ extends first-order logic by  conditional independence atoms
$$\indep{\tuple x}{\tuple y}{\tuple z}$$
with the informal meaning that the values of $\tuple y$ and $\tuple z$ are independent of each other, given any value of $\tuple x$. As $\inclogic$, also $\indlogic$ does not have downvard closure and is sensitive to the choice between the lax and the strict version of team semantics. For a sequence of atoms $\C$, we use $\FO(\C)$ to denote the logic obtained by adding the atoms listed in $\C$ to first-order logic. $\FO(\C)(k\forall)$  denotes the sentences of $\FO(\C)$ in which at most $k$ variables are universally quantified. In \cite{durand12} it was shown that 
$$\depforall{k} \leq \ESOfvar{k} \leq \depforall{2k}$$ where $\ESOfvar{k}$ denotes the skolem normal form $\ESO$ sentences in which at most $k$ universally quantified first-order variables appear. In \cite{galhankon13} it was shown that (under the lax team semantics)
\begin{itemize}
\item $\indNRforall{2} = \indNRlogic$ and
\item $\indNRincforall{1} = \indNRinclogic$
\end{itemize} 
where $\indNRlogic$ is the sublogic of $\indlogic$ allowing only so-called pure independence atoms $\indepc{\tuple x}{\tuple y}$. Moreover, it is known that $\indNRlogic$ is equivalent in expressive power to $\indNRinclogic$ and $\indlogic$ \cite{galliani12,vaananen13}.

Also arity fragments of $\FO(\C)$ were defined. By $\FO(\C)(k \mbox{\rm -dep})$ we denote the sentences of $\FO(\C)$ in which dependence atoms of the form $\dep(x_1, \ldots ,x_{n+1})$ with $n \leq k$ may appear. $\FO(\C)(k \mbox{\rm -ind})$ denotes the sentences of $\indlogic$ in which independence atoms containing at most $k+1$ different variables may appear. It was shown in \cite{durand12,galhankon13} that (under the lax team semantics)
$$\ESOarity{k}=\deparity{k} = \ind{k}$$ 
where $\ESOarity{k}$ denote the sentences of $\ESO$ in which the quantified functions and relations have arity at most $k$. This yields an infinite arity hierarchy for both $\deplogic$ and $\indlogic$ since the property "$R$ is even" is definable in $\ESOarity{k}$ but not in $\ESOarity{k-1}$, for $k$-ary $R$ \cite{ajtai83}.

The main contribution of this article is to show that arity fragments of inclusion logic also give rise to an infinite expressivity hierarchy. We let $\FO(\C)(k \mbox{\rm -inc})$ denote the $\FO(\C)$ sentences in which at most $k$-ary inclusion atoms (i.e. atoms of the form $\tuple x \sub \tuple y$ where $|\tuple x|=|\tuple y| \leq k$) may appear. For proving the claim, we define, for each $k\geq 2$, a graph property which is definable in $\inc{k}$ but not in $\inc{k-1}$. The non-definability part of the proof will be based on Martin Grohe's work in fixed point logics in \cite{Grohe96} where analogous results for $\TC$, $\LFP$, $\IFP$ and $\PFP$ were proved. We will also give a negative answer to the open question presented in \cite{galhankon13}; that was, whether the fragments $\incforall{k}$ give rise to an infinite expressivity hierarchy. This will be done by showing that $\incforall{1}=\inclogic$. However, if the strict version of team semantics is used, then we obtain $\incforall{k} < \incforall{k+1}$ \cite{hankon14}.

\section{Preliminaries}
In this section we give a short introduction to dependence, independendence and inclusion logic.

\subsection{Notation}
Unless otherwise stated, we use $x_1,x_2,\ldots$ %and $y_1,y_2,\ldots$ 
to denote variables and $t_1,t_2,\ldots$ %and $u_1, u_2, \ldots$ 
to denote terms. Analogously, bolded versions $\tuple x_1,\tuple x_2,\ldots$ and $\tuple t_1,\tuple t_2,\ldots$ are used to denote tuples of variables and tuples of terms, respectively. For tuples $\tuple a$ and $\tuple b$, we write $\tuple a\tuple b$ for  the concatenation of the tuples. If $f$ is a unary function and $(x_1, \ldots ,x_n)$ is a sequence listing members of $\Dom(f)$, then we write $f(x_1, \ldots ,x_n)$ for $(f(x_1), \ldots ,f(x_n))$.

\subsection{Inclusion logic}
The syntax of $\inclogic$ is obtained by adding  inclusion atoms to the syntax of  first-order logic. %The following grammars define $\inclogic$. 
\begin{definition}\label{def1}
%Let $\tau$ be a vocabulary. Then 
$\inclogic$ is defined by the following grammars. Note that in an inclusion atom $\tuple x_1 \sub \tuple x_2$, the tuples $\tuple x_1$ and $\tuple x_2$ must be of the same length.
$$\phi::= \tuple x_1 \sub \tuple x_2 \mid t_1 = t_2 \mid \neg t_1 = t_2 \mid R(\tuple t) \mid \neg R(\tuple t) \mid (\phi \tai \psi) \mid (\phi \tai \psi) \mid \forall x \phi \mid \on x \phi.$$
\end{definition}
$\deplogic$ and $\indlogic$ are obtained from Definition \ref{def1} by replacing inclusion atoms $\tuple x_1 \sub \tuple x_2$ with dependence atoms $\dep(\tuple x_1, x_2)$ and conditional independence atoms $\indep{\tuple x_1}{\tuple x_2}{\tuple x_3}$, respectively. Pure independence logic $\indNRlogic$ is a fragment of $\indlogic$ where only pure independence atoms $\indepc{\tuple x_1}{\tuple x_2}$ (i.e. atoms of the form $\indep{\emptyset}{\tuple x_1}{\tuple x_2}$) may appear. Also, for any sequence $\mathcal{C}$ of dependency atoms of $\{\sub, \dep(\ldots ), \bot_{\rm c},\bot \}$ we use $\FO(\mathcal{C})$ to denote the logic obtained from Definition \ref{def1} by replacing inclusion atoms with atoms listed in $\mathcal{C}$.

In order to define semantics for these logics, we need to define the concept of a \emph{team}. Let $\M$ be a model with the domain $M$. We assume that all our models have at least two elements.\footnote{This assumption is needed in Theorem \ref{thm3}.} An \emph{assignment} over $\M$ is a finite function that maps variables to elements of $M$. A \emph{team} $X$ of $M$ with the domain $\Dom(X) = \{x_1, \ldots ,x_n\}$ is a set of assignments from $\Dom(X)$ into $M$.
If $X$ is a team of $M$ and $F: X \rightarrow \Po(M)\setminus \{\emptyset\}$, then we use $X[F/x]$ to denote the team $\{s(a/x) \mid s\in X, a \in F(s)\}$ and $X[M/x]$ for $\{s(a/x) \mid s\in X, a \in M\}$. Also one should note that if $s$ is an assignment, then $\M \models_s \phi$ refers to Tarski semantics and $\M \models_{\{s\}} \phi$ refers to team semantics.

\begin{definition}\label{def2}
For a model $\M$, a team $X$ and a formula in $\FO(\sub, \dep(\ldots ),\bot_{\rm c})$, the satisfaction relation $\M \models_X \phi$ is defined as follows:
\begin{itemize}
\item $\M \models_X \a \Leftrightarrow \forall s\in X(\M \models_s \a)$, when $\a$ is a first-order literal,
\item $\M \models_X \tuple x_1 \sub \tuple x_2 \Leftrightarrow \forall s \in X\on s' \in X\big (s(\tuple x_1) = s'(\tuple x_2)\big )$,
\item $\M \models_X \indep{\tuple x_1}{\tuple x_2}{\tuple x_3} \Leftrightarrow \forall s,s' \in X\big (s(\tuple x_1) =s'(\tuple x_2) \Rightarrow \\\on s'' \in X(s''(\tuple x_1)=s(\tuple x_1), s''(\tuple x_2) = s(\tuple x_2),s''(\tuple x_3)= s'(\tuple x_3))\big )$,
\item $\M \models_X \dep(\tuple x_1, x_2) \Leftrightarrow \forall s,s'\in X\big (s(\tuple x_1) = s'(\tuple x_1) \Rightarrow  s(x_2) =  s'(x_2)\big )$,
\item $\M \models_X \phi \ja \psi \Leftrightarrow \M \models_X \phi \textrm{ and } \M \models_X \psi$,
\item $\M \models_X \phi \tai \psi \Leftrightarrow\M \models_Y \phi \textrm{ and } \M \models_Z \psi$, for some $Y\cup Z = X$,
\item $\M \models_X \on x \phi \Leftrightarrow  \M \models_{X[F/x]} \phi$, for some $F: X \rightarrow \Po(M)\setminus \{\emptyset\}$,
\item $\M \models_X \forall x \phi \Leftrightarrow  \M \models_{X[M/x]} \phi$.
\end{itemize}
If $\M \models_X \phi$, then we say that $X$ satisfies $\phi$ in $\M$. If $\phi$ is a sentence and $\M \models_{\{\emptyset\}}\phi $\footnote{$\{\emptyset\}$ denotes the team that consists of the empty assignment.}, then we say that $\phi$ is true in $\M$, and write $\M \models \phi$.
\end{definition}
Note that in Definition \ref{def2}, we obtain the \emph{lax} version of team semantics. The \emph{strict} version of team semantics is defined otherwise as in Definition \ref{def2} except that only disjoint subteams are allowed to witness split disjunction and existential quantification ranges over $M$ instead of non-empty subsets of $M$. (See \cite{galliani12} for more information.)

First-order formulae are \emph{flat} in the following sense (the proof is a straightforward structural induction). 
\begin{theorem}[Flatness]\label{flatness}
For a model $\M$, a first-order formula $\phi$ and a team $X$, the following are equivalent:
\begin{itemize}
\item $\M \models_X \phi$,
\item $\M \models_{\{s\}} \phi $ for all $s\in X$,
\item $\M \models_{s} \phi $ for all $s \in X$.
\end{itemize}
\end{theorem}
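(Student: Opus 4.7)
The plan is a straightforward structural induction on $\phi$, establishing the three-way equivalence by showing $(1) \Rightarrow (3)$, $(3) \Rightarrow (2)$, and $(2) \Rightarrow (1)$ simultaneously at each stage. The chain $(2) \Leftrightarrow (3)$ reduces, for the singleton case, to observing that for any first-order formula the truth of $\M \models_{\{s\}} \phi$ should match $\M \models_s \phi$; in fact once we have $(1) \Leftrightarrow (3)$, the equivalence $(2) \Leftrightarrow (3)$ follows by applying $(1) \Leftrightarrow (3)$ with the team $\{s\}$. So the real content is the equivalence of team satisfaction with pointwise Tarski satisfaction.

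The base case is immediate from the first clause of Definition \ref{def2}: for a first-order literal $\a$, we have $\M \models_X \a$ iff $\M \models_s \a$ for every $s\in X$. Conjunction is routine: $\M \models_X \phi \ja \psi$ iff $\M \models_X \phi$ and $\M \models_X \psi$, and by induction hypothesis this is equivalent to pointwise Tarski satisfaction of both $\phi$ and $\psi$, i.e.\ of $\phi \ja \psi$. For universal quantification, $\M \models_X \forall x \phi$ unfolds to $\M \models_{X[M/x]} \phi$; by the induction hypothesis this is $\M \models_{s(a/x)} \phi$ for every $s \in X$ and every $a \in M$, which is exactly $\M \models_s \forall x \phi$ for every $s\in X$.

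For the split disjunction $\phi \tai \psi$, the direction from team semantics to pointwise Tarski is easy: given witnesses $Y, Z$ with $Y \cup Z = X$, each $s \in X$ lies in $Y$ or $Z$, and applying the induction hypothesis to $Y$ or $Z$ gives $\M \models_s \phi$ or $\M \models_s \psi$. Conversely, if every $s \in X$ satisfies $\phi$ or $\psi$ in the Tarski sense, set $Y := \{s \in X \mid \M \models_s \phi\}$ and $Z := X \setminus Y$; by the induction hypothesis $\M \models_Y \phi$ and $\M \models_Z \psi$, and $Y \cup Z = X$. For existential quantification, assume $\M \models_X \on x\phi$ witnessed by some $F : X \to \Po(M) \setminus \{\emptyset\}$; for each $s \in X$, pick any $a \in F(s)$, so that $s(a/x) \in X[F/x]$, and the induction hypothesis yields $\M \models_{s(a/x)} \phi$, hence $\M \models_s \on x \phi$. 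Conversely, if each $s \in X$ has a Tarski witness $a_s$ with $\M \models_{s(a_s/x)}\phi$, define $F(s) := \{a_s\}$; then $X[F/x] = \{s(a_s/x) \mid s\in X\}$, and by the induction hypothesis $\M \models_{X[F/x]} \phi$, so $\M \models_X \on x \phi$.

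The only step requiring slight care is the disjunction clause, since the witnessing sets $Y, Z$ must cover $X$ (but need not be disjoint under lax semantics, which actually makes the argument easier); here the canonical choice $Y = \{s \mid \M \models_s \phi\}$ works without any fuss. No step is genuinely difficult, because the absence of proper team atoms (inclusion, independence, dependence) means every clause in Definition \ref{def2} either quantifies pointwise over $X$ or reduces to operations that commute with singleton decomposition.
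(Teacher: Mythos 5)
Your proof is correct and follows exactly the route the paper indicates: the paper gives no explicit argument beyond remarking that ``the proof is a straightforward structural induction,'' and your induction (literals from the first clause of Definition~\ref{def2}, conjunction and $\forall$ commuting with pointwise satisfaction, the canonical split $Y=\{s\mid \M\models_s\phi\}$ for $\tai$, and singleton-valued choice functions for $\on$) is precisely that induction, with the singleton case of the team giving the equivalence of the second and third items. Nothing is missing.
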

By $\Fr(\phi)$ we denote the set of variables that appear free in $\phi$. If $X$ is a team and $V$ a set of variables, then $X \upharpoonright V$ denotes the team $\{s\upharpoonright V \mid s \in X\}$. Now, all formulae satisfy the following \emph{locality} property. Note that this is not true under the strict team semantics.
\begin{theorem}[Locality]\label{locality}
Let $\M$ be a model, $X$ be a team, $\phi \in \FO(\sub, \dep(\ldots ),\bot_{\rm c})$ and $V$ a set of variables such that $\Fr(\phi) \sub V \sub \Dom(X)$. Then
$$\M \models_X \phi \Leftrightarrow \M \models_{X\upharpoonright V} \phi.$$
\end{theorem}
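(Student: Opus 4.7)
The plan is a straightforward structural induction on $\phi$, with the only delicate cases being disjunction and existential quantification where witnesses must be transported back and forth between $X$ and $X\upharpoonright V$.

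For the base cases, first-order literals are handled by the Flatness Theorem~\ref{flatness}, since flatness implies that satisfaction by $X$ is equivalent to pointwise satisfaction by each $s\in X$, and hence equivalent to satisfaction by $X\upharpoonright V$ (because $s\mapsto s\upharpoonright V$ is a bijection between $X$-assignments and $(X\upharpoonright V)$-assignments \emph{in terms of their values on} $\Fr(\phi)$, which is the only thing that matters for a literal). For the dependency atoms $\tuple x_1\sub\tuple x_2$, $\dep(\tuple x_1,x_2)$ and $\indep{\tuple x_1}{\tuple x_2}{\tuple x_3}$, the semantic clauses in Definition~\ref{def2} only look at the projections $s\upharpoonright \Fr(\phi)$; since $\Fr(\phi)\sub V$, these projections are the same in $X$ and in $X\upharpoonright V$, so the atoms are satisfied by one iff by the other. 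Conjunction is immediate from the induction hypothesis applied to each conjunct.

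For disjunction $\phi\tai\psi$, the forward direction is easy: a splitting $X=Y\cup Z$ restricts to $X\upharpoonright V=(Y\upharpoonright V)\cup(Z\upharpoonright V)$, and the induction hypothesis applies because $\Fr(\phi),\Fr(\psi)\sub V$. For the backward direction, given a splitting $X\upharpoonright V=Y'\cup Z'$, I would define
\[
Y=\{s\in X\mid s\upharpoonright V\in Y'\},\qquad Z=\{s\in X\mid s\upharpoonright V\in Z'\},
\]
and check that $Y\cup Z=X$, $Y\upharpoonright V=Y'$, and $Z\upharpoonright V=Z'$; then the induction hypothesis transports satisfaction from $Y',Z'$ to $Y,Z$. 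Crucially, lax semantics allows $Y$ and $Z$ to overlap, so this construction is unproblematic.

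For $\on x\,\phi$, given a choice $F:X\to\Po(M)\setminus\{\emptyset\}$ witnessing $\M\models_{X[F/x]}\phi$, I would define $F':X\upharpoonright V\to\Po(M)\setminus\{\emptyset\}$ by
\[
F'(s)=\bigcup\{F(t)\mid t\in X,\ t\upharpoonright V=s\},
\]
and verify the set identity $X[F/x]\upharpoonright(V\cup\{x\})=(X\upharpoonright V)[F'/x]$. The induction hypothesis applied with $V\cup\{x\}$ (which contains $\Fr(\phi)$) then gives the result. Conversely, given $F'$ for $X\upharpoonright V$, set $F(t)=F'(t\upharpoonright V)$ and observe the same identity. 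The universal quantifier case is even easier, since $X[M/x]\upharpoonright(V\cup\{x\})=(X\upharpoonright V)[M/x]$ holds directly.

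The main obstacle, and the reason locality fails under strict semantics, is precisely the backward direction for $\tai$ and $\on$: one must lift a splitting or a choice function defined on $X\upharpoonright V$ to one on $X$. Under lax semantics the natural lifts ($s\mapsto $ all $t$ with $t\upharpoonright V=s$, and pointwise union of images) work because we do not need disjointness or singleton-valuedness; under strict semantics these lifts generally violate those requirements and the statement indeed fails.
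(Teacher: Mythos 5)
Your proof is correct, and it fills a gap rather than competing with anything: the paper states Theorem~\ref{locality} without proof (it is a known result of the team-semantics literature, due to Galliani \cite{galliani12}), so there is no in-paper argument to compare against. Your induction is the standard one, and you correctly isolate the only places where laxness matters: lifting a split of $X\upharpoonright V$ to a (possibly overlapping) split of $X$ via preimages, and lifting a choice function by taking unions $F'(s)=\bigcup\{F(t)\mid t\upharpoonright V=s\}$ -- both of which would break under strict semantics, exactly as you say. Two cosmetic points: in the literal case the map $s\mapsto s\upharpoonright V$ is a \emph{surjection} from $X$ onto $X\upharpoonright V$, not a bijection, and surjectivity (plus preservation of values on $\Fr(\phi)$) is what your atom cases actually use; and in the quantifier cases one should note that the identity $X[F/x]\upharpoonright(V\cup\{x\})=(X\upharpoonright V)[F'/x]$ holds even if $x\in V$, because $s(a/x)$ overrides any previous value of $x$. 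Neither affects the validity of the argument.
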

We say that formulae $\phi,\psi \in  \FO(\sub, \dep(\ldots ),\bot_{\rm c})$ are \emph{logically equivalent}, written $\phi \equiv \phi'$, if for all models $\M$ and teams $X$ such that $\Fr(\phi) \cup \Fr(\psi) \sub \Dom(X)$,
$$\M \models_X \phi \Leftrightarrow \M \models_X \psi.$$
We obtain the following normal form theorem.
\begin{theorem}[\cite{galhankon13}]\label{thm3}
Any formula $\phi \in \FO (\atoms)$ is logically equivalent to a formula $\phi'$ such that
\begin{itemize}
\item $\phi'$ is of the form $Q^1 x_1 \ldots Q^n x_n \psi$ where $\psi$ is quantifier-free,
\item any literal or dependency atom which occurs in $\phi'$ occurred already in $\phi$,
\item the number of universal quantifiers in $\phi'$ is the same as the number of universal quantifiers in $\phi$.
\end{itemize}
\end{theorem}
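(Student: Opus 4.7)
The plan is to prove the statement by structural induction on $\phi$, showing that quantifiers can always be pulled to the front without altering the number of universal quantifiers or the set of literals and dependency atoms that occur.

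The base case (literals and dependency atoms) is immediate, and the quantifier case $\phi = Q y \chi$ reduces to the induction hypothesis applied to $\chi$, followed by an $\alpha$-conversion renaming the bound variables of the resulting prenex equivalent away from $y$.

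The substance lies in the connective cases. For $\phi = \phi_1 \ja \phi_2$ or $\phi = \phi_1 \tai \phi_2$, I would first apply the induction hypothesis to each $\phi_i$, then rename all bound variables so that they are pairwise distinct and disjoint from the free variables of the other subformula. The task then reduces to iterated application of the four commutation equivalences
\[
(\on x \, \chi) \ja \psi \equiv \on x \, (\chi \ja \psi), \qquad (\forall x \, \chi) \ja \psi \equiv \forall x \, (\chi \ja \psi),
\]
\[
(\on x \, \chi) \tai \psi \equiv \on x \, (\chi \tai \psi), \qquad (\forall x \, \chi) \tai \psi \equiv \forall x \, (\chi \tai \psi),
\]
each valid whenever $x \notin \Fr(\psi)$. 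The first three are verified directly from Definition \ref{def2} with the aid of Theorem \ref{locality}: since $X[F/x]$ and $X[M/x]$ agree with $X$ on $\Fr(\psi)$, locality transfers satisfaction of $\psi$ across the commutation in both directions; in the existential/disjunction case, a splitting on one side is transported to the other by extending the choice function $F$ arbitrarily on the $\psi$-half of the team.

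The main obstacle is the last equivalence, $(\forall x \chi) \tai \psi \equiv \forall x (\chi \tai \psi)$. The forward direction is routine, since $[M/x]$ distributes over union and locality handles $\psi$. The backward direction is delicate: given $X[M/x] = Y' \cup Z'$ with $\M \models_{Y'} \chi$ and $\M \models_{Z'} \psi$, one must produce a split $X = Y \cup Z$ with $\M \models_{Y[M/x]} \chi$ and $\M \models_Z \psi$. My strategy here is to use locality of $\psi$ to pull $Z'$ back to $\Dom(X)$, and then to exploit the slackness of lax semantics---subteams of a disjunction may overlap---to arrange the $\chi$-side so that $Y[M/x]$ carries $\chi$ without merely being a subset of $Y'$. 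The standing assumption that $|M| \geq 2$, flagged in the footnote to Definition \ref{def2}, enters here to prevent the auxiliary construction from collapsing. Once all four commutations are in hand, repeated application pushes every quantifier outward while preserving the universal count and introducing no new literals or dependency atoms, yielding the desired $\phi'$.
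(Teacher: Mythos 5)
Your induction scheme, the conjunction cases, and the first three commutation equivalences are sound; in particular $(\on x\,\chi)\tai\psi\equiv\on x\,(\chi\tai\psi)$ does hold under lax semantics, essentially for the reason you give (the set-valued witness $F$ can be restricted or extended on each half of the split). The fatal problem is your fourth rule: the equivalence $(\forall x \chi)\tai\psi \equiv \forall x(\chi\tai\psi)$ is \emph{false} in lax semantics for $\FO(\atoms)$, so no auxiliary construction can fill in the ``delicate'' backward direction --- the two formulas simply differ in meaning. Concretely, work in $M=\{0,1\}$ over the empty signature, let $X=\{s_0,s_1\}$ be the team over $\{y\}$ with $s_i(y)=i$, let $\psi:=\,\dep(y)$ (the constancy atom), and let
$$\chi:=\on u\on v\big(\dep(u)\ja\dep(v)\ja(\neg\,x=u \tai \neg\,y=v)\big) \ja \on w\big(\neg\,w=y \ja y\sub w\big),$$
which a team $T$ over $\{x,y\}$ satisfies iff $T=\emptyset$ or both (i) some pair of $M^2$ is missing from $\{(s(x),s(y)):s\in T\}$ and (ii) $y$ is non-constant on $T$. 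Then $\M\models_X\forall x(\chi\tai\psi)$: split $X[M/x]$ (all four pairs) into the three assignments other than $(x,y)=(1,1)$, which satisfy $\chi$, and the remaining singleton, which satisfies $\dep(y)$. But $\M\not\models_X(\forall x\chi)\tai\psi$: the only $Y\sub X$ with $\M\models_{Y[M/x]}\chi$ is $Y=\emptyset$, since $\{s_0\}[M/x]$ and $\{s_1\}[M/x]$ violate (ii) and $X[M/x]$ violates (i), while $Z=X$ fails $\dep(y)$. The cause is the one you yourself flagged --- a split of $X[M/x]$ may break the fibers $\{s(a/x):a\in M\}$ --- but without downward closure (which inclusion atoms destroy) there is no way to turn the $\chi$-witness into a cylinder $Y[M/x]$; overlap of subteams and $|M|\geq 2$ do not help. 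Your rule 4 is the classical rule for \emph{dependence} logic, where downward closure makes it valid; it does not survive the passage to $\FO(\atoms)$.

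The correct repair, and the real reason the standing assumption $|M|\geq 2$ is needed in Theorem~\ref{thm3}, is to pull the disjunctive split \emph{in front of} the universal quantifier using existentially quantified tags:
$$(\forall x\chi)\tai\psi \;\equiv\; \on z_1\on z_2\forall x\big((z_1=z_2 \ja \chi)\tai(\neg\,z_1=z_2 \ja \psi)\big),$$
valid when $x\notin\Fr(\psi)$ and $z_1,z_2$ are fresh. Since $z_1,z_2$ are quantified before $x$, the truth value of $z_1=z_2$ is constant along each $x$-fiber, so any split witnessing the right-hand side must respect fibers, and the $\chi$-part restricted to $\Dom(X)\cup\{x\}$ is forced to be exactly $Y[M/x]$ for $Y$ its restriction to $\Dom(X)$; conversely, realizing both tags requires two distinct elements of $M$ --- this, and not your rule 4, is where the two-element assumption of the footnote to Definition~\ref{def2} enters. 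This is essentially how the $\forall/\tai$ case is handled in \cite{galhankon13}, from which the present paper imports Theorem~\ref{thm3} without proof. Note that the repair inserts new equality literals $z_1=z_2$, so what this argument establishes is the theorem with the second bullet read as ``no new \emph{dependency} atoms'' (together with an unchanged number of universal quantifiers), which is all that the applications in Sections~\ref{sub2} and~\ref{sub3} actually use.
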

For logics $\mathcal{L}$ and $\mathcal{L}'$, we write $\Lo \leq \Lo'$, if for every signature $\tau$,  every $\Lo[\tau]$-sentence is logically equivalent to some $\Lo'[\tau]$-sentence. We write $\Lo \leq_{\mathcal{O}} \Lo'$ if $\Lo \leq \Lo'$ is true in finite linearly ordered models. Equality and inequality relations are obtained from $\leq$ naturally.
We end this section with the following list of theorems characterizing the expressive powers of our logics.
\begin{theorem}[\cite{vaananen07,gradel10,vaananen13,gallhella13}]\label{list}
\begin{itemize}
\item $\deplogic = \FO(\bot_{\rm c})= \FO(\bot)= \ESO$,
\item $\inclogic = \rm GFP$.
\end{itemize}
\end{theorem}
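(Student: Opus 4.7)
Since the theorem bundles four equivalences already established in the cited literature, my plan is to sketch the key idea for each direction rather than to reprove them from scratch. I would organize the argument around the two independent blocks: the first identifies $\deplogic$, $\FO(\bot_{\rm c})$ and $\FO(\bot)$ with $\ESO$, and the second identifies $\inclogic$ with $\GFP$ (over finite ordered structures).

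For $\deplogic \leq \ESO$ I would proceed by structural induction on formulae in the normal form of Theorem \ref{thm3}. The team $X$ is encoded as a fresh second-order relation $R$ whose arity equals $|\Dom(X)|$, and each clause of Definition \ref{def2} is rephrased as a first-order condition on $R$: the dependence atom $\dep(\tuple x, y)$ becomes the explicit functional dependency statement, split disjunction introduces existentially quantified subrelations of $R$ whose union is $R$, and existentials introduce fresh relations encoding the extended team. The converse $\ESO \leq \deplogic$ uses Skolemisation: given $\exists f_1 \dots \exists f_k\,\psi$ with $\psi$ first-order, each $f_i$ is simulated by a block $\forall \tuple x\,\exists y_i\,\dep(\tuple x,y_i)$ conjoined with $\psi$ where the terms $f_i(\tuple x)$ are replaced by the corresponding $y_i$.

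The equivalences $\FO(\bot_{\rm c}) = \ESO$ and $\FO(\bot) = \ESO$ then follow from two observations. First, the translation $\dep(\tuple x,y) \equiv y \bot_{\tuple x} y$ gives $\deplogic \leq \FO(\bot_{\rm c})$; conversely $\FO(\bot_{\rm c}) \leq \ESO$ is a direct team-encoding translation as above. Second, following \cite{vaananen13}, a conditional independence atom $\indep{\tuple x}{\tuple y}{\tuple z}$ can be replaced by a pure independence atom after introducing fresh existentially quantified copies and using lax disjunction to partition the team along the values of $\tuple x$; this yields $\FO(\bot_{\rm c}) \leq \FO(\bot)$, and the reverse is trivial.

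For $\inclogic = \GFP$ I would follow the strategy of Galliani and Hella \cite{gallhella13}. The direction $\inclogic \leq \GFP$ uses union-closure and locality (Theorem \ref{locality}): for each formula $\phi(\tuple v)$ there is a unique maximal team with domain $\tuple v$ satisfying $\phi$, and this team is definable as the greatest fixed point of a monotone operator whose update rule mirrors the bottom-up team semantics. The harder direction $\GFP \leq \inclogic$ requires simulating the stages of a GFP iteration inside a single team: an inclusion atom is used to force the team to be closed under the monotone defining rule, while lax existentials (viewed as multi-valued Skolem functions) and lax disjunctions are used to pick witnesses at each stage. I would expect the main obstacle to be this latter simulation, as one has to encode the entire descending approximation sequence inside a single team and verify that the least such closed team recovers exactly the greatest fixed point, while keeping the translation syntactically within inclusion logic.
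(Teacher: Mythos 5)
First, a point of reference: the paper never proves Theorem \ref{list} at all; it is a summary of results imported from the cited literature, so your sketch has to be judged against those sources rather than against anything in this text. For the first bullet your outline is essentially the standard one (team-to-relation encoding for $\deplogic\leq\ESO$, Skolemization via blocks $\forall\tuple x\on y\,\dep(\tuple x,y)$ for $\ESO\leq\deplogic$, and $\dep(\tuple x,y)\equiv\indep{\tuple x}{y}{y}$ for $\deplogic\leq\FO(\bot_{\rm c})$), and that part is fine. One step, however, fails as stated: you cannot reduce $\indep{\tuple x}{\tuple y}{\tuple z}$ to pure independence by ``using lax disjunction to partition the team along the values of $\tuple x$'', because a split disjunction in a fixed formula produces only a fixed finite number of subteams, while $\tuple x$ may take arbitrarily many distinct values in the team. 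The known proofs that $\FO(\bot)$ has the full power of $\FO(\bot_{\rm c})$ are genuinely nontrivial and use a different construction; this is exactly the content of the citations, not something recoverable by a team partition.

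The more serious gap is in the direction $\GFP\leq\inclogic$. You propose to ``encode the entire descending approximation sequence inside a single team'' and to verify that ``the least such closed team recovers exactly the greatest fixed point''. Both halves are off. If ``closed under the rule'' means being a post-fixed point $X\sub F(X)$, the least such team is the empty team; if it means a pre-fixed point $F(X)\sub X$, the least such is the \emph{least} fixed point. What Knaster--Tarski actually gives is that the greatest fixed point is the union of all post-fixed points, so a tuple lies in the gfp iff \emph{some} post-fixed point contains it, and this purely existential statement is what the Galliani--Hella translation writes down. Its shape appears in this very paper as Theorem \ref{pietron}: a formula of the form $\on\tuple z\,(\tuple t\sub\tuple z\ja\forall\tuple w\,(\psi'(\tuple z,\tuple w)\tai\tuple w\sub\tuple z))$, where the lax existential lets $\tuple z$ range over an arbitrary nonempty set of tuples (one may simply guess the gfp itself) and the inclusion atom $\tuple w\sub\tuple z$ enforces the post-fixed-point property. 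No approximation stages are encoded, and none could be: over infinite structures the descending approximation sequence can be transfinite, while a team is a single object. Finally, a smaller error: the equivalence $\inclogic=\GFP$ holds over arbitrary structures; finiteness and a linear order are needed only for the further correspondence with $\PTIME$, not for the theorem you are proving.
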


\section{Hierarchies in Inclusion Logic}
In this section we  consider universal and arity fragments of inclusion logic. In Subsection \ref{sub1} we will define these fragments and also concepts of strictness and collapse of a hierarchy. In Subsection \ref{sub2} and \ref{sub3} we will prove collapse of the universal hierarchy and strictness of the arity hierarchy, respectively. 

\subsection{Syntactical Fragments}\label{sub1}

\begin{definition} Let $\mathcal{C}$ be a list of dependencies of $\{\atomss\}$. Then universal and arity fragments of $\FO(\mathcal{C})$ are defined as follows:
\begin{itemize}
\item $\FO(\mathcal{C})(k\forall)$ is the class of $\FO(\mathcal{C})$ formulae in which at most $k$ universal quantifiers may appear,
\item $\FO(\mathcal{C})({k}\mbox{\rm-inc})$ is the class of $\FO(\mathcal{C})$ formulae in which inclusion atoms of the form $\tuple x_1 \sub \tuple x_2$ where $\tuple x_1$ and $\tuple x_2$ are sequences of length at most $k$, may appear,
\item $\FO(\mathcal{C})({k}\mbox{\rm-dep})$ is the class of $\FO(\mathcal{C})$ formulae in which dependence atoms of the form $\dep(\tuple x_1 , x_2)$ where $\tuple x_1 x_2$ is a sequence of length at most $k+1$, may appear,
\item $\FO(\mathcal{C})({k}\mbox{\rm-ind})$ is the class of $\FO(\mathcal{C})$ formulae in which conditional independence atoms of the form $\indep{\tuple x_1}{\tuple x_2}{\tuple x_3}$ where $\tuple x_1 \tuple x_2 \tuple x_3$ is a sequence listing at most $k+1$ distinct variables, may appear.
\end{itemize}
\end{definition}
For a sequence of logics $(\Lo_k)_{k \in \mathbb{N}}$, we say that the $\Lo_k$-hierarchy collapses at level $m$ if $\Lo_m= \bigcup_{k\in \mathbb{N}} \Lo_{k}$. If the hierarchy does not collapse at any level, then we say that it is strict.

As mentioned before, we will show that the $\incforall{k}$-hierarchy collapses already at level $1$ but $\inc{k}$ forms a strict hierarchy which holds already in finite models. 

\subsection{Collapse of the Universal Hierarchy}\label{sub2}
We will first show that the universal hierarchy of inclusion logic collapses. This is done by introducing a translation where all universal quantifiers are removed, and new existential quantifiers, new inclusion atoms and one new universal quantifier are added. The translation will hold already at the level of formulae.

\begin{theorem}
$\incforall{1}=\inclogic$.
\end{theorem}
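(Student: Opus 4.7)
The plan is to give a syntactic translation $\phi \mapsto \phi^*$ such that $\phi^* \in \incforall{1}$ and $\phi \equiv \phi^*$. By Theorem~\ref{thm3} I may assume $\phi$ is already in prenex form $Q_1 z_1 \cdots Q_n z_n \psi$ with $\psi$ quantifier-free, and the translation will preserve the quantifier-free matrix while modifying only the quantifier prefix. I will argue that the whole block of universal quantifiers $\forall x_1 \cdots \forall x_k$ appearing in the prefix can be replaced by a single universal quantifier $\forall z$, at the cost of introducing fresh existentials $\exists x_1 \cdots \exists x_k \exists \tuple y$ and a conjunction $\chi$ of inclusion atoms immediately in front of $\psi$.

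The design of $\chi$ is guided by the following principle. After the lone $\forall z$ the current team $X[M/z]$ has its $z$-column equal to $M$ in every $\tuple w$-slice, where $\tuple w$ lists the variables already in scope. Under lax semantics, the canonical choice $F_i(s)=M$ for each $\exists x_i$ makes that quantifier behave exactly like $\forall x_i$, so the team produced by $\forall z\,\exists x_1 \cdots \exists x_k$ agrees with the team produced by $\forall x_1 \cdots \forall x_k$ on every variable except $z$; by locality (Theorem~\ref{locality}) this yields the easy direction of the equivalence as long as the canonical choices satisfy $\chi$. The role of $\chi$ is to block the opposite direction's cheating: the inclusion atoms must force any satisfying family of $F_i$'s to generate a team that is, on $\Fr(\psi)$, at least as rich as $X[M/x_1]\cdots[M/x_k]$. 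A natural candidate is to chain atoms of the shape
\[
\tuple w z \sub \tuple w x_1,\quad \tuple w x_1 z \sub \tuple w x_1 x_2,\quad \ldots,\quad \tuple w x_1 \cdots x_{k-1} z \sub \tuple w x_1 \cdots x_{k-1} x_k,
\]
the idea being that each successive atom pulls the ``$z$-column equals $M$'' property further down into $(\tuple w, x_1, \ldots, x_{i-1})$-slices, forcing $x_i$ to take every value of $M$ in every such slice, and hence forcing the tuple $(x_1, \ldots, x_k)$ to cover $M^k$ in each $\tuple w$-slice.

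The main obstacle will be actually verifying that a chain of this shape suffices: the atom $\tuple w x_1 \cdots x_{i-1} z \sub \tuple w x_1 \cdots x_{i-1} x_i$ only forces the $x_i$-column to contain the $z$-column in the relevant slice, but the existential player may have thinned out the $z$-column there when choosing $F_1, \ldots, F_{i-1}$. To counter this I would introduce the auxiliary existentials $\tuple y=(y_1,\ldots,y_k)$ as book-keeping copies of $z$, pinned to it by additional inclusion atoms of the form $\tuple w z \sub \tuple w y_i$ and $\tuple w y_i \sub \tuple w z$ before any thinning can occur, and then use $y_i$ in place of $z$ on the right-hand side of each atom in the chain. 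Checking that this gadget forces the team restricted to $\Fr(\psi)$ to coincide with $X[M/x_1]\cdots[M/x_k]\upharpoonright\Fr(\psi)$, and hence by locality that $\psi$ is satisfied on the translated team iff it is satisfied on the team produced by $\forall x_1 \cdots \forall x_k$, is the technically most delicate part of the proof. Once this is established, a single application of the gadget to the whole universal block yields the desired $\incforall{1}$-equivalent of $\phi$.
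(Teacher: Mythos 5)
Your overall strategy -- prenex normal form via Theorem \ref{thm3}, re-quantifying the universally quantified variables existentially, and adding one fresh universal quantifier together with a chain of inclusion atoms whose prefixes grow along the quantifier prefix -- is essentially the paper's, and your chain of atoms has exactly the right shape. But there is a fatal flaw in where you place the single universal quantifier: you put $\forall z$ \emph{before} the new existentials $\on x_1 \ldots \on x_k \on \tuple y$, whereas it must come \emph{after all} existentials (innermost), as in the paper's translation
$$\phi' := \on x_1 \ldots \on x_n \forall y \Big(\bigwedge_{\substack{1 \leq i \leq n \\ Q^i = \forall}} \tuple z x_1 \ldots x_{i-1} y \sub \tuple z x_1 \ldots x_{i-1} x_i \ \ja\ \theta\Big).$$
With $\forall z$ outermost, every lax-semantics choice function for a later existential may depend on $z$, and inclusion atoms, which only compare \emph{sets} of value tuples, cannot detect such correlation. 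Concretely, take $\phi = \forall x_1 \forall x_2\, E(x_1,x_2)$ and the model with domain $\{0,1\}$ and $E=\{(0,0),(1,1)\}$, in which $\phi$ is false. In your translation let every existential variable ($x_1$, $x_2$, and all the $y_i$) copy $z$: the resulting team is the diagonal $\{\,s_a \mid a \in \{0,1\}\,\}$ where $s_a$ maps every variable to $a$. All your pinning atoms $\tuple w z \sub \tuple w y_i$, $\tuple w y_i \sub \tuple w z$ and all chain atoms $\tuple w x_1 \cdots x_{i-1} y_i \sub \tuple w x_1 \cdots x_{i-1} x_i$ hold on this team (indeed \emph{any} inclusion atom between variable tuples holds on a diagonal team, since both sides take exactly the constant tuples as values), and the matrix $E(x_1,x_2)$ only needs to hold on the diagonal, which it does. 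So your sentence is true while $\phi$ is false. The book-keeping variables $\tuple y$ do not help, because $y_i := z$ satisfies the pinning atoms yet inherits exactly the thinning you were trying to block; and since the diagonal team satisfies every conceivable system of inclusion atoms, the defect cannot be repaired by strengthening $\chi$ while keeping $\forall z$ outermost.

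The paper's placement avoids this entirely: since $y$ is quantified last, the final team has the form $X'[M/y]$, so $y$ takes every value of $M$ in every slice determined by all other variables, and no existential choice can correlate with it. Then $\tuple z x_1 \ldots x_{i-1} y \sub \tuple z x_1 \ldots x_{i-1} x_i$ genuinely forces the $x_i$-column to cover $M$ in each $(\tuple z, x_1, \ldots, x_{i-1})$-slice, which is what makes the hard direction go through. A second, smaller point: the universal quantifiers of $\phi$ need not form a contiguous block, so the existentials originally interleaved with them must stay in place and appear in the prefixes of the corresponding inclusion atoms, as in the displayed formula; your gadget, phrased for a single block $\forall x_1 \cdots \forall x_k$ over a fixed $\tuple w$, glosses over this.
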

\begin{proof}
Let $\phi \in \inclogic$ be a formula. We will define a $\phi' \in \incforall{1}$ such that $\phi \equiv \phi'$. By Theorem \ref{thm3} we may assume that $\phi$ is of the form
$$Q^1 x_1 \ldots Q^n x_n \theta$$
where $\theta$ is quantifier-free. We  let
$$\phi':=\on x_1 \ldots \on x_n \forall y (\bigwedge_{\substack{1 \leq i \leq n \\Q^i = \forall}} \tuple z x_1 \ldots x_{i-1} y \sub  \tuple z x_1 \ldots x_{i-1}x_i \ja \theta)
$$
where $\tuple z$ lists $\Fr(\phi)$. Let now $\M$ be a model and $X$ a team such that $ \Fr(\phi)\sub \Dom(X)$; we show that $\M \models_X \phi \Leftrightarrow \M \models_X \phi'$. By Theorem \ref{locality} we may assume without loss of generality that $ \Fr(\phi)= \Dom(X)$. Assume first that $\M \models_X \phi$ when there are, for $1 \leq i \leq n$, functions $F_i : X[F_1/x_1]\ldots [F_{i-1}/x_{i-1}] \rightarrow \Po (M) \setminus \{\emptyset\}$ such that $F_i(s) = M$ if $Q^i = \forall$, and $\M \models_{X'} \theta$
where 
$$X':= X[F_1/x_1]\ldots [F_{n}/x_{n}] .$$
For $\M \models_X \phi'$, it suffices to show that  
\begin{equation}\label{iso}
\M \models_{X'[M/y]} \bigwedge_{\substack{1 \leq i \leq n \\Q^i = \forall}} \tuple z x_1 \ldots x_{i-1} y \sub  \tuple z x_1 \ldots x_{i-1}x_i \ja \theta.
\end{equation}
By Theorem \ref{locality} $\M \models_{X'[M/y]} \theta$, so it suffices to consider only the new inclusion atoms of \eqref{iso}.
%$$\M \models_{X'[M/y]} \bigwedge_{\substack{1 \leq i \leq n \\Q^i = \forall}} \tuple z x_1 \ldots x_{i-1} y \sub  \tuple z x_1 \ldots x_{i-1}x_i.$$
So let 1 $\leq i \leq n $ be such that $Q^i= \forall$ and let $s \in X'[M/y]$; we need to find a $s' \in X'[M/y]$ such that $s (\tuple z x_1 \ldots x_{i-1} y ) = s'(  \tuple z x_1 \ldots x_{i-1}x_i)$.  Now, since $Q^i = \forall$, we note that $s(s(y)/x_i) \in X' \upharpoonright (\Fr(\phi) \cup \{x_1, \ldots ,x_i\})$. Therefore we may choose $s'$ to be any extension of $s(s(y)/x_i)$ in $X'[M/y]$.

For the other direction, assume that $\M \models_{X} \phi'$. Then for $1 \leq i \leq n$, there are functions $F_i : X[F_1/x_1]\ldots [F_{i-1}/x_{i-1}] \rightarrow \Po (M) \setminus \{\emptyset\}$ such that \eqref{iso} holds, 
%\begin{equation}\label{eq1}
%\M \models_{X'[M/y]} \bigwedge_{\substack{1 \leq i \leq n \\Q^i = \forall}} \tuple z x_1 \ldots x_{i-1} y \sub  \tuple z x_1 \ldots x_{i-1}x_i \ja \theta.
%\end{equation}
for $X':= X[F_1/x_1]\ldots [F_{n}/x_{n}].$ By Theorem \ref{locality} $\M \models_{X'} \theta$, so it suffices to show that, for all $1 \leq i \leq n$ with $Q^i = \forall$, $F_i$ is the constant function which maps assignments to $M$. So let $i$ be of the above kind, and let $s \in  X[F_1/x_1]\ldots [F_{i-1}/x_{i-1}]$ and $a \in M$. We need show that $s(a/x_i) \in X[F_1/x_1]\ldots [F_{i}/x_{i}]$. First note that since $y$ is universally quantified, $s(a/y)$ has an extension $s_0$ in $X'[M/y]$. Therefore, by \eqref{iso}, there is $s_1 \in X'[M/y]$ such that $s_0 (\tuple z x_1 \ldots x_{i-1} y ) = s_1(  \tuple z x_1 \ldots x_{i-1}x_i)$. Since now $s_1$ agrees with $s$ in $\Fr(\phi)\cup \{x_1, \ldots ,x_{i-1}\}$ and maps $x_i$ to $a$, we obtain that
$$s(a/x_i) = s_1 \upharpoonright ( \Fr(\phi) \cup \{x_1, \ldots ,x_i\}) \in X[F_1/x_1]\ldots [F_{i}/x_{i}].$$
\qed
\end{proof}

\subsection{Strictness of the Arity Hierarchy}\label{sub3}

In this section we will show that the following strict arity hierarchy holds (already in finite models).

\begin{theorem}\label{hierarkia}
For $k \geq 2$, $\inc{k-1} < \inc{k}$.
\end{theorem}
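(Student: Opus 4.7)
My plan is to exhibit, for each $k \geq 2$, a graph property $P_k$ that is definable in $\inc{k}$ but not in $\inc{k-1}$. The candidate $P_k$ comes from Grohe's arity-hierarchy theorem for fixed point logics: in \cite{Grohe96} he shows that for each $k$, some $\PTIME$-decidable graph property requires fixed-point predicates of arity at least $k$ in $\LFP$ (and analogously for $\IFP$, $\TC$, $\PFP$) on finite ordered structures. Since $\inclogic = \GFP$ by Theorem \ref{list}, and $\GFP$ and $\LFP$ coincide on finite ordered models (both capturing $\PTIME$), the strategy is to transport this arity hierarchy across the equivalence.

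For the upper bound $P_k \in \inc{k}$, I would trace the arity through the Galliani--Hella translation $\GFP \leq \inclogic$. That translation encodes the iteration of an $m$-ary (greatest) fixed-point predicate by a team whose assignments carry an $m$-tuple of designated variables tracking the stages of the fixed point; the inclusion atoms that appear in the encoding have arity $m$. Hence a $\GFP$-sentence whose fixed-point predicates all have arity at most $k$ translates into an $\inc{k}$-sentence, placing $P_k$ in $\inc{k}$.

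For the lower bound $P_k \notin \inc{k-1}$, I would go in the opposite direction and verify that every $\inc{k-1}$-sentence is logically equivalent on finite ordered models to a $\GFP$-sentence whose fixed-point predicates have arity at most $k-1$. Here Theorem \ref{thm3} lets me standardize the quantifier prefix, and Theorem \ref{locality} keeps the set of free variables under control, so that each inclusion atom $\tuple x_1 \sub \tuple x_2$ of arity at most $k-1$ contributes only a fixed-point predicate of arity at most $k-1$ to the translation. Combined with Grohe's theorem that $P_k$ is not definable by any $(k-1)$-ary fixed point, this gives $P_k \notin \inc{k-1}$.

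The hard step is making the arity bookkeeping in the Galliani--Hella equivalence genuinely tight on both sides. The inclusion-to-$\GFP$ translation builds its fixed point out of a team-description relation whose arity is naturally the sum of the relevant inclusion-atom arities and the number of currently active first-order variables; avoiding a blow-up beyond $k-1$ requires either a careful choice of normal form (as afforded by Theorem \ref{thm3}) or, as a fallback, a reindexing of the separating family $P_k$ to absorb a constant-factor loss. A minor further point is that Grohe's results are stated for $\LFP$, $\IFP$, $\TC$ and $\PFP$ rather than $\GFP$, but $\GFP$ and $\LFP$ are inter-translatable with the same fixed-point arity (by negation-duality), so the separating property transfers without cost.
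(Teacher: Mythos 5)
The central step of your lower bound --- that every $\inc{k-1}$-sentence is equivalent to a $\GFP$-sentence whose fixed-point predicates have arity at most $k-1$ --- is precisely what is \emph{not} available, and the paper flags this explicitly in its conclusion: ``the translations between $\inclogic$ and $\GFP$ provided in \cite{gallhella13} do not respect arities,'' and it is left open there whether the $\inc{k}$-fragments can be related to arity fragments of fixed point logics at all. The known translation from inclusion logic to $\GFP$ encodes the whole team as a relation, so the fixed-point arity it produces is governed by the number of quantified variables of the sentence, which is unbounded over $\inc{k-1}$: Theorem \ref{thm3} normalizes the quantifier prefix but does not shorten it, and Theorem \ref{locality} controls free variables, not the quantified ones the team must carry. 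This is not a ``constant-factor loss'' that a reindexing of the separating family could absorb; the loss depends on the individual formula. The same objection applies to your upper bound: the paper does not trace arities through the $\GFP \leq \inclogic$ translation either, but instead invokes Galliani's explicit formula (Theorem \ref{pietron}), which defines $\neg[TC_{\tuple x,\tuple y}\EDGE{k}](\tuple b,\tuple c)$ using a $k$-ary inclusion atom directly. For the non-definability side the paper bypasses fixed-point logic entirely: it reuses only Grohe's graph construction $\A(k,n)$ (Theorem \ref{grohethm}), not his separation theorem \eqref{fix}. Assuming a prenex $\inc{k-1}$-sentence $\phi$ with witnessing team $X$ over $(\A,\tuple b,\tuple c)$, it transforms $X$ by row-preserving automorphisms and an Ehrenfeucht--Fra\"iss\'e-style swap into a team $X^*$ witnessing $\phi$ in $(\A,\tuple b,\e(\tuple c))$; the arity bound enters only because an inclusion atom mentions at most $k-1$ variables, so its mismatched coordinates can be corrected by a single automorphism supplied by item 4 of Theorem \ref{grohethm}.

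A second, independent error: you place Grohe's separation ``on finite ordered structures.'' It holds on finite structures \emph{without} order; on ordered structures, strictness of the $\LFP$/$\IFP$ arity hierarchies is a major open problem --- by Imhof's theorem quoted in the paper's conclusion \cite{imhof96}, strictness there would imply $\LOGSPACE < \PTIME$. So the separating property you want to transport does not (provably) exist in the ordered setting, and your justification via ``$\GFP$ and $\LFP$ coincide on finite ordered models since both capture $\PTIME$'' is both unnecessary (the $\GFP$/$\LFP$ interdefinability with preserved arity is a duality via negation, valid on all structures) and harmful, since it commits your whole argument to the ordered world where the plan cannot get off the ground. Any repair must work on unordered finite structures and must either establish the missing bounded-arity translation or, as the paper does, argue directly in team semantics.
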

For proving this, we will use the earlier work of Grohe in \cite{Grohe96} where an analogous result was proved for $\TC$, $\LFP$, $\IFP$ and $\PFP$. More precisely, it was shown that, for $k \geq 2$,
\begin{equation}\label{fix}
\TCary{k} \not\leq \PFPary{k-1}
\end{equation}
where the superscript part gives the maximum arity allowed for the fixed point operator. Since $\TCary{k} \leq \LFPary{k} \leq \IFPary{k} \leq \PFPary{k}$, a strict arity hierarchy is obtained for each of these logics.

We start by fixing $\tau$ as the signature consisting of one binary relation symbol $E$ and $2k$ constant symbols $b_1, \ldots ,b_k,c_1, \ldots ,c_k$. The idea is to present a $\inc{k}[\tau]$-definable graph property, and show that it is not definable in $\inc{k-1}[\tau]$.
This graph property will actually be negated version of the one that separates the fragments in \eqref{fix}.
%For a natural number $k$, we define this graph property as follows. 
For this, we first define a first-order  formula indicating that the $k$-tuples $\tuple x$ and $\tuple y$ form a $2k$-clique in a graph. Namely, we define $\Edge{\tuple x}{\tuple y}$ as follows:
$$\Edge{\tuple x}{\tuple y}:=\bigwedge_{1 \leq i ,j \leq k} E(x_i,y_j) \ja \bigwedge_{1 \leq i \neq j \leq k}( E(x_i,x_j) \ja E(y_i,y_j)).$$
Then the non-trivial part is to show that negation of the transitive closure formula $[TC_{\tuple x , \tuple y} \EDGE{k}](\tuple b , \tuple c)$ is not definable in $\inc{k-1}[\tau]$. It is definable in $\inc{k}[\tau]$ by the following theorem.

\begin{theorem}[\cite{galliani12}]\label{pietron}
Let $\psi(\tuple x, \tuple y)$ be any first-order formula, where $\tuple x$ and $\tuple y$ are tuples of disjoint variables of the same arity. Furthermore, let $\psi'(\tuple x, \tuple y)$ be the result of writing $\neg \psi (\tuple x ,\tuple y)$ in negation normal form. Then, for all suitable models $\M$ and all suitable pairs $\tuple b$, $\tuple c$ of constant term tuples of the model, 
$$\M \models \phi \Leftrightarrow \M \models\neg[TC_{\tuple x , \tuple y} \psi](\tuple b , \tuple c) ,
$$
for $\phi$ defined as
$$\on \tuple z ( \tuple b \sub \tuple z \ja \tuple z \neq \tuple c \ja \forall \tuple w ( \psi' ( \tuple z ,\tuple w) \tai \tuple w \sub \tuple z ) ).$$
\end{theorem}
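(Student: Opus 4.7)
\emph{Proof proposal.}

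My plan is to verify the two implications separately, in each case reading off from the clauses of $\phi$ what a witnessing $\on$-function and lax disjunction split must look like, and comparing this with the graph of $\psi$. Throughout, $R\sub M^k$ will denote the range of values assigned to $\tuple z$ after the outer existential quantifier, i.e.\ the non-empty set $F(\emptyset)$ chosen to witness $\on\tuple z$, and $\tuple b^\M, \tuple c^\M$ will denote the interpretations of the constant tuples.

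First I would unwind the semantics. Choosing $F$ in $\on\tuple z$ produces a team in bijection with $R$ via $\tuple a \mapsto s_{\tuple a}$, where $s_{\tuple a}(\tuple z)=\tuple a$. Under this identification, the atom $\tuple b\sub\tuple z$ just says $\tuple b^\M\in R$, and, by flatness, the first-order conjunct $\tuple z\neq\tuple c$ says $\tuple c^\M\notin R$. Adding $\forall\tuple w$ yields the team of all pairs $(\tuple a,\tuple a')\in R\times M^k$, and the lax split disjunction asks for a (possibly overlapping) cover $Y\cup Z$ of this team such that $\psi'(\tuple z,\tuple w)$ holds pointwise on $Y$ (again by flatness, as $\psi'$ is first-order and classically equivalent to $\neg\psi$) and $\tuple w\sub\tuple z$ holds on $Z$.

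For the forward direction, I would fix witnesses $R, Y, Z$ and argue by induction on the length of a $\psi$-path from $\tuple b^\M$ that every reachable tuple lies in $R$. The base case is $\tuple b^\M\in R$. For the step, if $\tuple a\in R$ and $\psi(\tuple a,\tuple a')$ holds, then the assignment sending $(\tuple z,\tuple w)$ to $(\tuple a,\tuple a')$ cannot be in $Y$, so it lies in $Z$; the inclusion atom $\tuple w\sub\tuple z$ on $Z$ then delivers some $s'\in Z$ with $s'(\tuple z)=\tuple a'$, giving $\tuple a'\in R$. Since $\tuple c^\M\notin R$, no $\psi$-path from $\tuple b^\M$ can reach $\tuple c^\M$, which is precisely $\neg[TC_{\tuple x,\tuple y}\psi](\tuple b,\tuple c)$.

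Conversely, assuming the TC-formula fails, I would take $R$ to be the set of tuples $\psi$-reachable from $\tuple b^\M$, with $\tuple b^\M$ itself included, and use $R$ as the image of $F$; then $\tuple b^\M\in R$ and $\tuple c^\M\notin R$ by assumption. I would split the universal team as
$$Y=\{(\tuple a,\tuple a')\in R\times M^k:\neg\psi(\tuple a,\tuple a')\},\qquad Z=\{(\tuple a,\tuple a')\in R\times M^k:\tuple a'\in R\}.$$
By closure of $R$ under $\psi$-successors, every pair with $\psi(\tuple a,\tuple a')$ and $\tuple a\in R$ already lies in $Z$, so $Y\cup Z$ covers the whole team. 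The $\psi'$-clause on $Y$ is immediate by flatness, and for $\tuple w\sub\tuple z$ on $Z$ the witness required by $(\tuple a,\tuple a')\in Z$ is $(\tuple a',\tuple b^\M)$, which lies in $Z$ because $\tuple b^\M\in R$. The subtle point, and in my view the heart of the theorem, is the ``fixpoint flavour'' of $\tuple w\sub\tuple z$ under lax semantics: combined with $\forall\tuple w$ and the overlapping split, it forces $R$ to be closed under $\psi$-successors, which is exactly the iteration driving the TC-operator, and it relies crucially on the laxness of both the existential step (arbitrary non-empty $F(\emptyset)$) and the disjunction.
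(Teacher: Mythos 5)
Your proof is correct. There is nothing in the paper to compare it against: Theorem \ref{pietron} is imported by citation from \cite{galliani12} and never proved in this paper, and your argument is, up to presentation, the standard (essentially Galliani's original) one. The key steps all check out: the identification of the lax existential witness for $\on\tuple z$ with an arbitrary non-empty $R\sub M^{k}$; the use of flatness to read $\tuple z\neq\tuple c$ and $\psi'(\tuple z,\tuple w)$ pointwise; the induction showing that any witnessing $R$ is closed under $\psi$-successors, because a pair satisfying $\psi$ cannot be placed on the $\psi'$-side of the split and the inclusion atom $\tuple w\sub\tuple z$ on the other side forces the successor back into $R$; and, conversely, the choice of $R$ as the set of $\psi$-reachable tuples together with $\tuple b^{\M}$, with the (genuinely lax, overlapping) split $Y,Z$ and the witness $(\tuple a',\tuple b^{\M})$ for the inclusion atom. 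Two minor points you leave implicit. First, your converse direction needs $\tuple c^{\M}\notin R$, which requires not only that no $\psi$-path of length at least one leads from $\tuple b^{\M}$ to $\tuple c^{\M}$, but also that $\tuple b^{\M}\neq\tuple c^{\M}$; this is exactly what the hedge ``suitable pairs'' in the statement is covering (and it holds in the paper's application, where $\tuple b$ and $\tuple c$ lie in the first and last rows of $\A(k,n)$). Second, as the paper remarks right after the theorem, inclusion atoms containing constant terms, such as $\tuple b\sub\tuple z$, are not literally allowed by Definition \ref{def1} and must be eliminated by existentially quantified variables; this is a cosmetic repair that does not affect your argument.
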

Note that $\phi$ is not yet of the right form since Definition \ref{def1} does not allow terms to appear in inclusion atoms. This is however not a problem since we can replace all terms that appear in inclusion atoms with new existentially quantified variables.
%The proof of non-definability in $\inc{k-1}$ will be based on Martin Grohe's work on arity hierarchies in fixed point logics. In his PhD thesis Grohe proved Theorem \ref{grohethm} and showed that using this theorem and EF-games for PFP, one finds a TC$^k$ definable class of finite graphs which is not definable in PFP$^{k-1}$ where $k$ (or $k-1$) denotes the maximum arity of a fixed point operator. This yields strict arity hierarchy for TC, SFP, LFP, IFP and PFP.

Hence, for Theorem \ref{hierarkia}, it suffices to prove that $\neg[TC_{\tuple x , \tuple y} \EDGE{k}](\tuple b , \tuple c)$ is not definable in $\inc{k-1}[\tau]$. In this part we will follow the work in \cite{Grohe96}. We first define a simple structure $\A(k,n)$, for $k,n \geq 1$. $\A(k,n)$ consists of two disjoint $E_k$-paths of lenght $n$ i.e.
\begin{align*}
A :=& \{1, \ldots ,n\}\times \{-k, \ldots ,-1,1, \ldots ,k\} \\
E_k^{\A} :=& \{(I,-1), \ldots ,(I,-k)(I+1,-1), \ldots,(I+1,-k)\mid 1 \leq I \leq n\} \\
&\cup \{(I,1), \ldots ,(I,k)(I+1,1), \ldots,(I+1,k)\mid 1 \leq I \leq n\}.
\end{align*}
The following theorem generates a graph $\A$ of the form $\A(k,n)$, for $E_k^{\A}:=\EDGE{k}^{\A}$, with many useful properties. It was originally proved by Grohe using a method of Hrushovski \cite{Hrushovski92} to extend partial isomorphisms of finite graphs.

\begin{theorem}[\cite{Grohe96}]\label{grohethm}
Let $k,n \geq 2$. Then there exists a graph $\A=\A(k,n)$ %and a formula $\Edge{\tuple x }{\tuple y}$ (which is a conjunction of atomic formulae) 
such that:
\begin{enumerate}
\item There exists a mapping $\textrm{row} : A \to \{1, \ldots ,n\}$ such that 
$$\forall a,b \in A:(E^{\A}ab \Rightarrow \row{b} - \row{a} \leq 1).$$
\item There exists an automorphism $\e$ of $\A$ that is self-inverse and preserves the rows i.e.
\begin{itemize}
\item $\e^{-1} = \e$,
\item  $\forall a\in A :\row{\e(a)} = \row{a}$.
\end{itemize}

\item There exist tuples $\tuple b ,\tuple c \in A^k$ in the first and last row respectively (i.e. $\forall i \leq k :(\row{b_i}=1 \ja \row{c_i}=n)$) such that 
\begin{equation*} \A \models \neg[TC_{\tuple x , \tuple y} \EDGE{k}](\tuple b , \tuple c)\textrm{ and } \A \not\models \neg[TC_{\tuple x , \tuple y} \EDGE{k}](\tuple b , \e(\tuple c)).\footnote{In \cite{Grohe96}, $\tuple c$ and $\e (\tuple c)$ are here placed the other way round. This is however not a problem since $\e$ is self-inverse and preserves the rows.}
\end{equation*}
\end{enumerate}

\begin{enumerate}
\setcounter{enumi}{3}
\item For all $a_1,\ldots ,a_{k-1} \in A$ there exists an automorphism $f$ that is self-inverse, preserves the rows, and maps $a_1, \ldots ,a_{k-1}$ according to $\e$, but leaves all elements in rows of distance $>1$ from $\row{a_1}, \ldots ,\row{a_{k-1}}$ fixed i.e.
\begin{itemize}
\item $f^{-1} = f$
\item $\forall a\in A :\row{f(a)} = \row{a}$,
\item $\forall i \leq k-1: f(a_i)=\e(a_i)$,
\item for each $a \in A$ with $\forall i \leq k-1: |\row{a}-\row{a_i}| >1$ we have $f(a) = a$.
\end{itemize}
\end{enumerate}
\end{theorem}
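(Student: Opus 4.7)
The plan is to start from a naturally row-layered base graph and enrich it by a Hrushovski-style amalgamation to produce all the required partial automorphisms, while carefully maintaining the layered structure and the disjointness of the two $\EDGE{k}$-paths between $\tuple b$ and $\tuple c$. Clauses $(1)$--$(3)$ are essentially immediate in the base graph $\A_0 := \A(k,n)$; clause $(4)$ is what forces the amalgamation.

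For $(1)$, I would take $\row{(I,i)} := I$ on the two disjoint strips and check that the only $\EDGE{k}$-tuples in $\A_0$ connect two consecutive rows of the same strip, so edges in the underlying binary relation go only between vertices at row-distance $\leq 1$. For $(2)$, the strip-swapping involution defined by $\e(I,i) := (I,-i)$ is evidently self-inverse, row-preserving and a graph automorphism of $\A_0$. For $(3)$, I would take $\tuple b$ in the first row of the negative strip and $\tuple c$ in the last row of the positive strip. In $\A_0$ there is no $2k$-clique spanning both strips, so every $\EDGE{k}$-chain starting at $\tuple b$ stays entirely on the negative strip and cannot reach $\tuple c$, while $\e(\tuple c)$ sits in the last negative row and is reachable from $\tuple b$ by the obvious chain of consecutive negative row-pairs.

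The real work is in clause $(4)$. In $\A_0$ the target partial map---send each $a_i$ to $\e(a_i)$ while fixing all vertices at row-distance $>1$ from every $\row{a_i}$---is not yet an automorphism: e.g.\ swapping the strips in row $I$ while fixing row $I+1$ breaks the $E$-edges that cross rows $I$ and $I+1$. My plan is to apply Hrushovski's theorem on extending partial isomorphisms: every finite graph embeds in a finite graph in which every partial isomorphism of the original extends to a full automorphism of the extension. The prescribed partial map on the neighbourhood of $\row{a_1}, \ldots, \row{a_{k-1}}$ is already a partial isomorphism of $\A_0$, so the Hrushovski closure applies. I would then argue that the extension can be carried out \emph{symmetrically under $\e$} and \emph{row-pair by row-pair}: this keeps $\e$ an automorphism of $\A$ (for $(2)$) and ensures that each extended automorphism is row-preserving and self-inverse (the two demands in $(4)$).

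The main obstacle will be preserving clause $(3)$ across the amalgamation. A naive Hrushovski closure could introduce new $2k$-cliques that span previously disconnected portions of the graph, creating fresh $\EDGE{k}$-paths between $\tuple b$ and $\tuple c$. To block this, I would confine the amalgamation to be \emph{row-respecting}: at every stage, new edges and cliques are added only among vertex sets whose row labels lie within $\{I, I+1\}$ for some $I$. Property $(1)$ then extends to $\A$ automatically, and a straightforward induction on chain length shows that every $\EDGE{k}$-chain in $\A$ advances the row index by at most $1$ per step and stays within one of the two strips---because the strips were disconnected in $\A_0$ and only intra-row-pair amalgamations are performed. Hence $\tuple c$ remains $\EDGE{k}$-unreachable from $\tuple b$ in $\A$, while the chain to $\e(\tuple c)$ survives. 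Verifying that the row-by-row Hrushovski step still has enough freedom to realise every partial automorphism demanded by $(4)$ while respecting these rigidity constraints is, to my mind, the crux of the whole argument.
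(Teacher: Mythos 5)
A preliminary remark: the paper itself contains no proof of Theorem \ref{grohethm} --- it is imported wholesale from \cite{Grohe96} --- so your sketch can only be judged on its own terms as a reconstruction of Grohe's construction. Judged so, it has a genuine gap, and it sits exactly where you place ``the crux'': your mechanism for preserving clause (3) is not merely unverified, it is \emph{incompatible} with clause (4). Your induction (``every $\EDGE{k}$-chain \ldots stays within one of the two strips'') rests on the premise that the two strips remain essentially separated in $\A$ because they were disconnected in $\A_0$ and the amalgamation is confined to row-pairs. But any graph satisfying (4) must contain, around every middle row, vertices $E$-adjacent to both strips. Indeed, take $a_i:=(I,-i)$ for $i\leq k-1$ and a middle row $I$; clause (4) yields an automorphism $f$ with $f((I,-i))=\e((I,-i))=(I,i)$ which fixes every vertex at row distance $>1$ from $I$, in particular all of the negative column $I+2$. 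Applying $f$ to the $E$-path $(I,-1),(I+1,-1),(I+2,-1)$ of $\A_0\sub\A$ gives an $E$-path from the positive-strip vertex $(I,1)$ to the negative-strip vertex $(I+2,-1)$, so the strips are entangled by binary edges in $\A$. Worse, applying $f$ to the $2k$-cliques formed by consecutive negative columns shows that $\A$ necessarily contains $2k$-cliques mixing positive-strip vertices with vertices completely joined to negative columns: the chain $\mathrm{col}_-(I-2)\to f(\mathrm{col}_-(I-1))\to f(\mathrm{col}_-(I))\to f(\mathrm{col}_-(I+1))\to \mathrm{col}_-(I+2)$ consists of genuine $\EDGE{k}$-edges of $\A$ and passes through tuples containing positive-strip vertices. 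Hence the invariant ``chains stay within one strip'' is provably false in \emph{any} graph satisfying (4), clause (3) cannot be secured by preventing inter-strip cliques (which is what your row-respecting restriction is designed to do), and one must instead prove that the inter-strip cliques forced by (4) can never assemble into an $\EDGE{k}$-path from $\tuple b$ to $\tuple c$ while the path to $\e(\tuple c)$ survives. That reachability argument is the actual content of Grohe's construction; your proposal assumes it away.

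A second, related gap is the use of Hrushovski's theorem as a black box. The theorem only asserts that $\A_0$ embeds into \emph{some} finite graph in which every partial isomorphism of $\A_0$ extends to an automorphism; it gives no control over the extension. The new vertices carry no row labels, so clause (1) does not extend ``automatically''; the extended automorphisms need not be self-inverse or row-preserving; clause (4) requires them to fix all far-away elements of the \emph{new} universe $A$, not merely of $A_0$; and nothing entitles you to confine the new edges to adjacent rows or to prescribe the $2k$-clique structure of the extension. What is actually needed is an $\e$-equivariant, row-stratified extension together with a structure theorem for the $2k$-cliques of the resulting graph --- which Grohe obtains by building the graph explicitly, layer by layer, out of Hrushovski extensions of small pieces, and only then proving (3). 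You flag this verification as the crux but leave it entirely open, and, by the previous paragraph, it cannot be closed along the route you propose.
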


Using this theorem we will prove the following lemma.
\begin{lemma}\label{main}
Let $k \geq 2$ and let $\tau$ be a signature consisting of a binary relation symbol $E$ and $2k$ constant symbols $b_1, \ldots ,b_k,c_1, \ldots ,c_k$. Then $\neg[TC_{\tuple x , \tuple y} \EDGE{k}](\tuple b , \tuple c)$ is not definable in $\inc{k-1}[\tau]$. %where $\tau$ is a vocabulary that consists of $2$-ary relation symbol $E$ and $2k$ constant symbols.
\end{lemma}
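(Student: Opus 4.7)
The plan is to argue by contradiction. Suppose some $\phi \in \inc{k-1}[\tau]$ defines the class of $\tau$-structures satisfying $\neg[TC_{\tuple x , \tuple y} \EDGE{k}](\tuple b , \tuple c)$. By Theorem \ref{thm3} I may take $\phi$ to be in prenex form $Q^1 x_1 \ldots Q^m x_m \theta$ with $\theta$ quantifier-free. Fix $n$ sufficiently large relative to $m$, and let $\A = \A(k,n)$ together with $\tuple b, \tuple c$ and $\e$ be as supplied by Theorem \ref{grohethm}. Regard $\A$ as a $\tau$-structure in two ways, calling them $\A^1$ and $\A^2$: in $\A^1$ the constants $c_1,\ldots,c_k$ are interpreted as $\tuple c$, while in $\A^2$ they are interpreted as $\e(\tuple c)$; the remaining constants $b_1,\ldots,b_k$ and the relation $E$ are interpreted identically. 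Clause (3) then gives $\A^1 \models \phi$ and $\A^2 \not\models \phi$, so the task reduces to deriving $\A^2 \models \phi$ from $\A^1 \models \phi$.

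From $\A^1 \models \phi$ and the prenex shape, I obtain a team $X$ over $\A^1$ built from $\{\emptyset\}$ by a sequence of operations $X \mapsto X[F_i/x_i]$ (for existential $Q^i$) and $X \mapsto X[M/x_i]$ (for universal $Q^i$), with $\A^1 \models_X \theta$. The central task is to build, by a parallel sequence of operations, an analogous team $X'$ over $\A^2$ with $\A^2 \models_{X'} \theta$. The guiding idea is to produce $X'$ assignment-wise as the image of $X$ under locally chosen automorphisms $f_s$ of $\A$ supplied by clause (4) of Theorem \ref{grohethm}: for each $s \in X$ the automorphism $f_s$ acts as $\e$ on the at most $k-1$ coordinates of $s$ that may participate in any inclusion atom of $\theta$, and fixes elements of all rows at distance $>1$ from those coordinates. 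The arity bound $|\tuple u| = |\tuple v| \leq k-1$ on inclusion atoms $\tuple u \sub \tuple v$ of $\theta$ is exactly what allows clause (4) to furnish such an $f_s$.

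The main obstacle, and the technical heart of the argument, is coordinating the automorphisms $f_s$ across the whole team: whenever $s, s' \in X$ witness an inclusion atom $\tuple u \sub \tuple v$ via $s(\tuple u) = s'(\tuple v)$, the images must also satisfy $f_s(s(\tuple u)) = f_{s'}(s'(\tuple v))$; first-order literals of $\theta$, which may mention the constants $c_i$ whose interpretation differs between $\A^1$ and $\A^2$, must evaluate identically on the images; and universal steps must still sweep all of $M$. Following Grohe's approach in \cite{Grohe96}, the construction proceeds inductively along the quantifier prefix, maintaining as invariant a bound on the finite set of rows of $\A$ touched by $X$ and $X'$ so far; choosing $n$ large compared to $m$ ensures that at each step a fresh window of rows is available where clause (4) can be applied without disturbing earlier commitments. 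Piecing these local corrections together yields $\A^2 \models_{X'} \theta$ and hence $\A^2 \models \phi$, contradicting clause (3).
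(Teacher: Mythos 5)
Your setup (contradiction, prenex normal form via Theorem \ref{thm3}, the two structures $(\A,\tuple b,\tuple c)$ and $(\A,\tuple b,\e(\tuple c))$, and the list of obstacles) matches the paper, but the construction you sketch for $X'$ has a genuine gap: a team over $\A^2$ satisfying $\theta$ \emph{cannot} be obtained assignment-wise as images of $X$ under automorphisms of $\A$. The map you need must fix everything near the bottom row (to preserve literals $x_i = b_j$) while acting as $\e$ near the top row (to preserve literals $x_i = c_j$, whose constant is reinterpreted as $\e(c_j)$ in $\A^2$). No automorphism of $\A$ does this: if some automorphism fixed $\tuple b$ pointwise and mapped $\tuple c$ to $\e(\tuple c)$, the two expansions would be isomorphic, contradicting clause (3) of Theorem \ref{grohethm}. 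Moreover, clause (4) only supplies automorphisms moving $k-1$ prescribed elements, whereas the coordinates that need the $\e$-treatment in a single assignment --- all those pinned to constants $c_j$ plus those occurring in the (possibly many) inclusion atoms of $\theta$ --- can far exceed $k-1$. This is why the paper's key device is \emph{not} an automorphism: it is the piecewise map $h$ (``$\swap{}$''), which is the identity below an assignment-specific cut row $\mi{\row{s(\tuple x)}}$ and equals $\e$ above it. Edge literals survive because the cut lies in a gap of width $\geq 2$ containing none of the assignment's rows and edges join only adjacent rows (clause (1)); constants survive because the first and last rows always fall on the correct sides of the cut. Clause (4) automorphisms enter only as \emph{corrections} for inclusion atoms, and --- this is the step your coordination paragraph is missing --- the team is first closed under the whole group $\F$ they generate (the ``$\auto{}$'' operation), so that when $s = h(f\circ t)$ needs a witness for $\tuple y \sub \tuple z$, the repaired assignment $h(f_{\tuple a}\circ f\circ t')$ is already guaranteed to lie in $X^* = \swap{\auto{X}}$. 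The arity bound $k-1$ is used precisely there: the set $I$ of indices on which the two cuts disagree has size at most $k-1$, so clause (4) can furnish $f_{\tuple a}$.

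Your proposed inductive invariant is also untenable: you cannot maintain ``a bound on the finite set of rows touched by $X$ and $X'$ so far,'' because the very first universal quantifier produces a team $X[A/x_i]$ containing assignments through \emph{every} row of $\A$, leaving no fresh window. The paper does not induct along the quantifier prefix at all. Instead it builds $X^*$ in one shot and then proves, as a separate claim, that $X^*$ is reachable by quantifying $Q^1x_1\ldots Q^mx_m$ over $\{\emptyset\}$; the universal steps go through because the cut function $\mathrm{mid}$ is chosen with a prefix-consistency property (assignments agreeing on $x_1,\ldots,x_l$ have cuts lying on the same side of each common value), so that for any $a \in A$ one of the two lifts $t(f^{-1}(a)/x_p)$, $t(f^{-1}\circ\e(a)/x_p)$ of an assignment yields, after $h$, exactly the required extension $s(a/x_p)$. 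Without some substitute for this per-assignment, prefix-consistent cut, both the universal-quantifier step and the constant-literal step of your argument fail.
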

The outline of the proof is listed below:

\begin{enumerate}
\item\label{eka}  First we assume to the contrary that there is a $\phi(\tuple b ,\tuple c) \in \inc{k-1}[\tau]$ which is equivalent to $\neg[TC_{\tuple x , \tuple y} \EDGE{k}](\tuple b , \tuple c)$.

\item\label{toka} By Theorem \ref{thm3} we may assume that $\phi$ is of the form $Q^1 x_1 \ldots Q^m
x_m \theta$ where $\theta$ is a quantifier-free $\inc{k-1}[\tau]$ formula. %We may also assume that $m \geq 2$ since we can always add dummy existentially quantified variables.
\item\label{kolmas} We let $n= 2^{m+2}$ and obtain a graph $\A$ for which items 1-4 of Theorem \ref{grohethm} hold, for $k,n$. In particular, we find tuples $\tuple b$ and $\tuple c$ such that $ \A \models \neg [TC_{\tuple x , \tuple y} \EDGE{k}]  (\tuple b , \tuple c) $ and  $\A \not\models \neg[TC_{\tuple x , \tuple y} \EDGE{k}](\tuple b ,  \e(\tuple c))$. Then by the counter assumption $(\A, \tuple b , \tuple c) \models \phi$ when we find, for $1 \leq i \leq m$, functions 
$$F_i : \{ \emptyset \}[F_1/x_1]\ldots [F_{i-1}/F_{i-1}] \rightarrow \Po(A) \setminus\{\emptyset\}$$ 
such that $F_i(s) = A$ if $Q^i = \forall$, and
$$(\A, \tuple b , \tuple c) \models_X \theta$$
where $X := \{\emptyset\} [F_1/x_1]\ldots [F_{m}/x_{m}]$.

\item\label{neljäs} From $X$ we will construct a team $X^*$ such that 
\begin{equation}\label{holdaa}
(\A, \tuple b , \e(\tuple c)) \models_{X^*} \theta.
\end{equation}
For this, we define operations $\textrm{auto}$ and $\textrm{swap}$ for teams $Y$ of $A$ with $\Dom(Y) = \{x_1, \ldots ,x_m\}$. We first let $\auto{Y}$ be the set of assignments $f \circ s$ where $s \in Y$ and $f$ is a composition of automorphisms of $\A$ that swap $k-1$-tuples of $A$ but leave elements in tuples $\tuple b$ and $\tuple c$ fixed. Then we let $\swap{Y}$ transform each assignment $s$ of $Y$ to a $s'$ such that $s'$ corresponds to Duplicator's choices in a single play of EF$_m((\A, \tuple b , \tuple c) ,(\A, \tuple b , \e(\tuple c)) $ where Spoiler picks members of $(\A, \tuple b , \tuple c)$ according to $s$ and Duplicator picks members of $(\A, \tuple b , \e(\tuple c)) $  according to her winning strategy.
%%FOOTNOTE
\footnote{A play of this kind is illustrated in Fig. \ref{kuva1}. The idea is that after each round $i\leq m$, $M$ and $N$ are placed so that $N-M \geq 2^{m+1 -i}$. Also for each $j \leq i$, $y_j=x_j$ if $\row{x_j} \leq M$, and $y_j=\e(x_j)$ if $\row{x_j} \geq N$, where $y_j$ and $x_j$ represent Duplicator's and Spoiler's   choices, respectively. In the picture, $\a$ and $\b$ represent two alternative choices Spoiler can make at the fourth round. If Spoiler chooses $x_4:=\a$, then Duplicator chooses $y_4:=\a$, and $M$ is moved to $\row{\a}$. If Spoiler chooses $x_4:=\b$, then Duplicator chooses $y_4:=\e(\b)$, and $N$ is moved to $\row{\b}$. Proceeding in this way we obtain that at the final stage $m$, $(\A, \tuple b , \tuple c, x_1, \ldots,x_m )$ and  $(\A, \tuple b , \e(\tuple c), y_1, \ldots ,y_m) $ agree on all atomic $\FO[\tau]$ formulae.} We then let $X^*:= \swap{\auto{X}}$ and show \eqref{holdaa}.

\item\label{viides} At last, we will show that $X^*$ can be constructed by quantifying $Q^1x_1 \ldots Q^mx_m$ in $\A$ over $\{\emptyset\}$.
\end{enumerate}
Hence we will obtain that $\A \models \phi(\tuple b , \e (\tuple c))$. But now, since $\A \not\models \neg[TC_{\tuple x , \tuple y} \EDGE{k}](\tuple b , \e(\tuple c))$, this contradicts with the assumption that $\phi(\tuple b,\tuple c)$ defines $\neg[TC_{\tuple x , \tuple y} \EDGE{k}](\tuple b , \tuple c)$.

\begin{figure}[h]
\center
\begin{tikzpicture}[xscale=1,yscale=1.3 ]
%x_4 b)
\draw [->] [ dashed,  thick, shorten >=3pt]  (2.05,3.9) to [out=15,in=165]  (7,3.9) ;
\draw[fill]  (2,3.9) circle [radius=0.03];
\draw[fill] (7,3.9) circle [radius=0.03];

\node [above] (xa) at (2,3.9) {$\b$};

\node [above] (exa) at (7,3.95) {$\e(\b)$};

%x_4 a)

\draw[fill]  (1.5,2.8) circle [radius=0.03];

\node [left] (xb) at (1.5,2.8){$\a$};

\draw[->, dashed, shorten >=3pt] [thick] (1.5,2.8)  to [my loopss]  (1.5,2.8);

%tuple e(b)
\draw [<->][ultra thick] [red] (4,6) -- (0,6);
\node [above] at (2,6) {$\e(\tuple c)$};
%tuple a
\draw [<->] [ultra thick] [red] (4,0) -- (0,0);
\node [below] at (2,0) {$\tuple b$};

%tuple b
 \draw [<->][ultra thick] [red] (5,6) -- (9,6);
\node [above] at (7,6) {$\tuple c$};

%apulinjat
\draw (0,0) -- (0,6);
\draw (4,0) -- (4,6);
\draw  (5,6) -- (5,0) -- (9,0) -- (9,6) ;

%N-linja
\draw [ loosely dotted][very thick] (0,4.7) -- (4,4.7) ;
\draw [loosely dotted][very thick]  (5,4.7) -- (9,4.7) ;
\node [left] at (0,4.7) {$N$};

%N-->
\draw [ dashed,->, thick] (-0.25,4.55) -- (-0.25,3.9);

%x_1
\draw [<-, thick, shorten <=3pt]  (3.1,4.7) to [out=15,in=165]  (8,4.7) ;
\draw[fill]  (3.1,4.7) circle [radius=0.03];
\draw[fill] (8,4.7) circle [radius=0.03];

\node [above] (x1) at (3.1,4.75) {$\e(x_{1})$};

\node [above] (ex1) at (8,4.7) {$x_{1}$};

%x_2
\draw[fill]  (7.5,0.8) circle [radius=0.03];

\node [ left] (xb) at (7.5,0.8){$x_{2}$};

\draw[->, shorten >=3pt] [thick] (7.5,0.8)  to [my loopss]  (7.5,0.8);

%x_3
\draw[fill]  (2.8,2.2) circle [radius=0.03];

\node [above left] (xb) at (2.8,2.2){$x_{3}$};

\draw[->] [thick, shorten >=3pt] (2.8,2.2)  to [my loopss]  (2.8,2.2);

%N/M-linja
%\draw [dotted] [very thick] (0,3.45) -- (4,3.45) ;
%\draw [dotted, very thick]  (5,3.45) -- (9,3.45) ;
%\node [left] at (0,3.45) {$N + \frac{M-N}{2}$};

%M-linja
\draw [loosely dotted][very thick]  (0,2.2) -- (4,2.2) ;
\draw [loosely dotted][very thick]  (5,2.2) -- (9,2.2) ;
\node [left] at (0,2.2) {$M$};

%M->
\draw [dashed,->, thick] (-0.25,2.32) -- (-0.25,2.8);

%%selitys
\draw [->, thick] (1,-1) -- (2.95,-1);
\draw [->, thick, dashed ] (1,-1.3) -- (2.95,-1.3);
\draw[fill] (1,-1) circle [radius=0.025];
\draw[fill] (3,-1) circle [radius=0.025];
\draw[fill] (1,-1.3) circle [radius=0.025];
\draw[fill] (3,-1.3) circle [radius=0.025];
\node [above] at (1,-.9) {Spoiler};
\node [above] at (3,-.9) {Duplicator};

\draw [<->, thick] (8,-1.15) --  (8,-.3);
\node [right] at (8.1,-.35) {$n$};
\node [right] at (8.1,-1.1) {$1$};
\draw [<->, thick] (6.3,-1.2) --  (7.9,-1.2);
\node [below] at (6.35,-1.25) {$-k$};
\node [below] at (7.85,-1.25) {$k$};

\end{tikzpicture}
\caption{} \label{kuva1}
\end{figure}
Let us now proceed to the proof.

\begin{proof}[Lemma \ref{main}]
We may start from item \ref{neljäs} of the previous list. Hence we have 
\begin{equation}\label{ekafakta}
(\A, \tuple b , \tuple c) \models_X \theta,
\end{equation}
for $X := \{\emptyset\}[F_1/x_1]\ldots [F_{m}/x_{m}]$, and the first step is to construct a team $X^*$ such that 
\begin{equation}\label{holdaa'}
(\A, \tuple b , \e(\tuple c)) \models_{X^*} \theta.
\end{equation} 
For this, we first define the operation $\textrm{auto}$. By item 4 of Theorem \ref{grohethm}, for all $\tuple a$ listing $a_1,\ldots ,a_{k-1} \in A$ there exists an automorphism $f_{\tuple a}$ which maps $\tuple a$ pointwise to $\e(\tuple a)$, but leaves all elements in rows of distance $>1$ from $\row{a_1}, \ldots ,\row{a_{k-1}}$ fixed.
Let $\mathcal{F}=(F,\circ)$ be the group generated by the automorphisms $f_{\tuple a}$ where $f_{\tuple a}$ is obtained from item 4 of Theorem \ref{grohethm} and $\tuple a$ is a sequence listing $a_1, \ldots ,a_{k-1}\in A$ such that $2 < \row{a_i} < n-1$, for $1 \leq i \leq k-1$. For a team $Y$ of $A$, we then let
$$\auto{Y}:=\{f \circ s \mid f \in \F, s \in Y\}.$$ 

%%MUUTA TEOREEMASSA MYÖS A --> C%%%älä sittenkään
Next we will define the operation $\textrm{swap}$. For this, we will first  define mappings $\textrm{mid}$ and $h$. We let $\textrm{mid}$ map $m$-sequences of  $\{1, \ldots, n\}$ into $\{1, \ldots ,n\}$ so that, for any $\tuple p:=(p_1, \ldots ,p_m)$ and $\tuple q := (q_1, \ldots ,q_m)$ in $\{1, \ldots ,n\}^m$,
\begin{enumerate}
\item $1 < \mi{\tuple p} < n$,
\item $\forall  i \leq m: \mi{\tuple p} \neq p_i$,
\item %$\forall \tuple p ,\tuple q$ and 
$\forall l \leq n$: if $\tuple p \upharpoonright \{1, \ldots ,l\} = \tuple q \upharpoonright \{1, \ldots ,l\}$, then $\forall  i \leq l: p_i <\mi{\tuple p}$ iff $q_i < \mi{\tuple q}$.
\end{enumerate}
This can be done by following the strategy illustrated in Fig. \ref{kuva1}. We shall explain this in detail in the following. Let $ \tuple p:= (p_1, \ldots ,p_m)$ be a sequence listing natural numbers of $\{1, \ldots ,n\}$. For $\mi{\tuple p}$, we first show how to choose $M$ and $N$, for each $0 \leq i \leq m$, so that 
\begin{itemize}
\item $N-M \geq 2^{m+1 -i}$,
\item $\forall j\leq i : p_j \leq M$ or $p_j \geq N$.
\end{itemize}
We do this inductively as follows. We let $M:= 1$ and $N := n$, for $i=0$. Since $n=2^{m+2}$, clearly the conditions above hold. Assume that $M$ and $N$ are defined for $i$ so that the conditions above hold; we  define $M'$ and $N'$ for $i+1$ as follows:
\begin{enumerate}
\item If $p_{i+1} -M \leq N - p_{i+1}$, then we let $M':= \max\{M,p_{i+1}\}$ and $N':= N$.
\item If $p_{i+1} -M > N - p_{i+1}$, then we let $M':= M $ and $N':=\min\{N, p_{i+1}\}$.
\end{enumerate}
Note that in both cases $\forall j \leq i+1: p_j \leq M'$ or $p_j \geq N'$, and
$$N'-M' \geq \left \lceil{\frac{N-M}{2}}\right \rceil \geq 2^{m+1-(i+1)}.$$
Proceeding in this way we conclude that at the final stage $m$ we have $N-M \geq 2$ with no $p_1, \ldots ,p_m$ stricly in between $M$ and $N$. We then choose $\mi{\tuple p}$ as any number in $]M,N[$. Note that defining  $\textrm{mid}$ in this way we are able to meet the conditions 1-3.

After this we define a mapping $h  : {}^{\{x_1, \ldots ,x_m\}}\!A \to {}^{\{x_1, \ldots ,x_m\}}\!A$. For an assignment $s:\{x_1, \ldots ,x_m\} \rightarrow A$, the assignment $h(s):\{x_1, \ldots ,x_m\} \rightarrow A$ is defined as follows:
$$h(s)(x_i) =
\begin{cases}
 s(x_i) & \text{if }\row{s(x_i)} < \mi{\row{s(\tuple x)}},\\
 \e \circ s(x_i) & \text{if }\row{s(x_i)} > \mi{\row{s(\tuple x)}},
\end{cases}
$$
where $\tuple x := (x_1, \ldots ,x_m)$. For a team $Z$ of $A$ with $\Dom(Z)=\{x_1, \ldots ,x_m\}$, we now let 
$$\swap{Z} :=\{h (s)\mid s\in Z\},$$ 
and define, for each $Y \sub X$,
$$Y^*:=\swap{\auto{Y}}.$$

With $X^*$ now defined, we will next show that \eqref{holdaa'} holds. Without loss of generality we may assume that if a constant symbol $b_j$ (or $c_j$) appears in an atomic subformula $\a$ of $\theta$, then $\a$ is of the form $x_i = b_j$ (or $x_i=c_j$) where $x_i$ is an existentially quantified variable of the quantifier prefix. 
%\begin{equation}\label{holdsaa}
%(\A, \tuple a, \e(\tuple b)) \models_{X^*} \theta.
%\end{equation}
Hence and by \eqref{ekafakta}, it now suffices to show that for all $Y \sub X$ and all quantifier-free $\psi \in \inc{k-1}[\tau]$ with the above restriction for constants,
\begin{equation*}
(\A, \tuple b,\tuple  c) \models_Y \psi \Rightarrow (\A, \tuple b, \e(\tuple c)) \models_{Y^*} \psi.
\end{equation*}
This can be done by induction on the complexity of the quantifier-free $\psi$.
Since $Y^* \cup Z^* = (Y \cup Z)^*$, for $Y,Z \sub X$, it suffices to consider only the case where $\psi$ is an atomic or negated atomic formula. For this, assume that $(\A, \tuple b,\tuple  c) \models_Y \psi$; we will show that 
\begin{equation}\label{wanted}
(\A, \tuple b, \e(\tuple c)) \models_{Y^*} \psi.
\end{equation}
Now $\psi$ is either of the form $x_i = b_j$, $x_i = c_j$, $x_i = x_j$, $\neg x_i = x_j$, $E(x_i,x_j)$, $\neg E(x_i,x_j)$ or $\tuple y \sub \tuple z$ where $b_j,c_j$ are constant symbols and $\tuple y,\tuple z$ are sequences of variables from $\{x_1, \ldots ,x_m\}$ with $|\tuple y| = |\tuple z| \leq k-1$. %let $Y \sub X$ and $\psi \in \inc{k-1}$ be atomic or negated atomic formula, and assume that $(\A, \tuple a, \tuple b) \models_Y \psi$. W
%we have two cases:
\begin{itemize}%%%TÄHÄN JÄI
\item Assume first that $\psi$ is of the form $x_i = b_j$ or $x_i = c_j$, and let $s \in Y^*$ be arbitrary. For \eqref{wanted}, it suffices to show by Theorem \ref{flatness} that 
\begin{equation}\label{flattaus}
(\A, \tuple b, \e(\tuple c)) \models_{s} \psi.
\end{equation}
First note that $s = h(f \circ t)$ for some automorphism $f\in \mathcal{F}$ and assignment $t\in Y$ for which, by the assumption and Theorem \ref{flatness}, $(\A, \tuple b, \tuple c) \models_t \psi$. Hence for \eqref{flattaus}, we only need to show that $s(x_i) = t(x_i)$ in case $t(x_i)$ is listed in $\tuple b$, and  $s(x_i)=\e \circ t(x_i)$ in case $t(x_i)$ is listed in $\tuple c$.
For this, first recall that $\mathcal{F}$ is the group generated by automorphisms $f_{\tuple a}$ where $f_{\tuple a}$ is obtained from item 4 of Theorem \ref{grohethm} and $\tuple a$ is a sequence listing $a_1, \ldots ,a_{k-1}\in A$ such that $2 < \row{a_i} < n-1$, for $1 \leq i \leq k-1$. Therefore $f$ leaves all elementes in the first and the last row fixed when $f(\tuple b) = \tuple b$ and $f(\tuple c) = \tuple c$. On the other hand, by the definition of $\rm mid $, $1 <\mi{\row{f\circ t(\tuple x))}} < n$, and hence $h(f \circ t)(x_i)= f\circ t(x_i)$ if $f\circ t(x_i)$ is in the first row, and $h(f \circ t)(x_i)= \e\circ f\circ t(x_i)$ if $f\circ t(x_i)$ is in the last row.
%fixes all elements $f \circ t(x_i)$ in the first row and swaps all elements $f \circ t(x_i)$ in the last row to $\e\circ f \circ t(x_i)$. 
Since tuples $\tuple b$ and $\tuple c$ are in the first and the last row, respectively, we conclude that the claim holds. % $s$ agrees with $t$ in variables that are mapped in $t$ to $\tuple a$, and swaps those variables pointwise to $\e (\tuple b)$ that are mapped in $t$ to $\tuple b$.
The case where $\psi$ is of the form $x_i = x_j$ or $\neg x_i = x_j$ is straightforward.
\item Assume that $\psi$ is of the form $E(x_i,x_j)$ or $\neg E(x_i,x_j)$. Again, let $s \in Y^*$ when $s=h(f\circ t)$ for some $f \in \F$ and $t\in Y$. For \eqref{flattaus}, consider first the case where
\begin{align}\label{tää}
&\row{t(x_i)},\row{t(x_j)}< \mi{\row{t(\tuple x)}} \textrm{, or }\\
 &\row{t(x_i)},\row{t(x_j)}> \mi{\row{t(\tuple x)}}.\label{toi}
\end{align}
Since $f$ is a row-preserving automorphism, we conclude by the definition of $h$ that $s$ maps both $x_i$ and $x_j$ either according to $f\circ t$ or according to $\e \circ f\circ t$. Since $\e$ is also an automorphism, we obtain \eqref{flattaus} in both cases.
Assume then that \eqref{tää} and \eqref{toi} both fail. Then by symmetry suppose we have
$$\row{t(x_i)}< \mi{\row{t(\tuple x))}} <\row{t(x_j)}.$$
Since $(\A ,\tuple b, \tuple c)\models_t \psi$, we have by item 1 of Theorem \ref{grohethm} that $\psi$ is $\neg E(x_i,x_j)$. Since $f$ and $\e$ preserve the rows, we have
$$\row{s(x_i)}< \mi{\row{s(\tuple x))}} <\row{s(x_j)}.$$
Therefore we obtain $(\A, \tuple b, \tuple c) \models_s \neg E(x_i,x_j)$ which concludes this case. 

\item Assume that $\phi$ is $\tuple y \sub \tuple z$, for some $ \tuple y= y_1 \ldots y_l$ and $\tuple z=z_1 \ldots z_l$ where $l \leq k-1$. Let $s \in Y^*$ be arbitrary. For \eqref{wanted}, we show that there exists a $s' \in Y^*$ such that $s(\tuple y)=s'(\tuple z)$. Now $s= h(f\circ t)$ for some $f\in \mathcal{F}$ and $t \in Y$, and
$(\A, \tuple b, \tuple c) \models_Y \psi$ by the assumption. Hence there exists  a $t' \in Y$ such that $t(\tuple y) = t'(\tuple z)$. Let now $I$ list the indices $1 \leq i \leq l$ for which (i) or (ii) hold:%%FOOTNOTE
\footnote{An example where $\tuple y := y_1y_2y_3$ and $\tuple z := z_1z_2z_3$ is illustrated in Fig. \ref{kuva2}. Note that in the example, $I = \{2\}$ since the index number $2$ satisfies (ii). Then letting $s_0 := h(f\circ t)$, we obtain $s(y_1y_3)=s_0(z_1z_3)$ but only $s(y_2) = \e\circ s_0(z_2)$. Fig. \ref{kuva3} shows that choosing $s':=h(f_a \circ f \circ t')$, for $a:= f\circ t'(z_2)$, we obtain $s(\tuple y) = s'(\tuple z)$.}

\begin{align*}\textrm{(i)}&\hspace{2mm}\row{t(y_{i})}<\mi{\row{t(\tuple x)}} \textrm{ and } \row{t'(z_i)} >\mi{\row{t'(\tuple x)}}, %\textrm{ or }
\\
\textrm{(ii)}&\hspace{2mm}\row{t(y_{i})} > \mi{\row{t(\tuple x)}} \textrm{ and } \row{t'(z_i)} <\mi{\row{t'(\tuple x)}}.
\end{align*}

\begin{figure}[h]
\center

\begin{tikzpicture}[xscale=1.1,yscale=1.3 ]

%tuple e(b)
\draw  (4,6) -- (0,6);
%\node [above] at (2,6) {$\e(\tuple b)$};
%tuple a
\draw  (4,0) -- (0,0);
%\node [below] at (2,0) {$\tuple a$};

%tuple b
 \draw  (5,6) -- (9,6);
%\node [above] at (7,6) {$\tuple b$};

%apulinjat
\draw (0,0) -- (0,6);
\draw (4,0) -- (4,6);
\draw  (5,6) -- (5,0) -- (9,0) -- (9,6) ;

%M-linja
\draw [ loosely dotted][very thick] (0,4) -- (4,4);
\draw [loosely dotted][very thick]  (5,4) -- (9,4) ;

%N-linja
\draw [loosely dotted][very thick]  (0,2.2) -- (4,2.2) ;
\draw [loosely dotted][very thick]  (5,2.2) -- (9,2.2) ;

%%selitys
\node [right] at (1,-1) {$\begin{cases}
M':=\mi{\row{t' (\tuple x) } } \\
M:= \mi{\row{t (\tuple x) } }
\end{cases}$};

\node [right] at (6,-1) {$\begin{cases}
s= h(f\circ t) \\
s_0:= h(f \circ t')
\end{cases}$};

\draw [<->, very thick, shorten >=.3pt]  (9.3,6) to node [sloped, above] {$s=\e\circ f\circ t$} (9.3,2.2);
\draw [<->, very thick, shorten <=.3pt]  (9.3,2.2) to node [sloped, above] {$s= f\circ t$} (9.3,0);

\draw [<->, very thick, shorten >=.3pt]  (9.8,6) to node [sloped, above] {$s_0=\e \circ f\circ t'$} (9.8,4);
\draw [<->, very thick, shorten <=.3pt]  (9.8,4) to node [sloped, above] {$s_0=  f\circ t'$} (9.8,0);

%mid
\node [left] at (0,4) {$M'$ };
\node [left] at (0,2.2) {$M$ };
%YLÄOSA
\draw[fill]  (7.5,5) circle [radius=0.03];
%ekanuoli
\draw [->, thick,  shorten >=3pt] (7.5,5) to [out=160, in=20] node [sloped, above] {$f$} (6,5);
%\node [right] at (7.5,4.97) {$\begin{cases}
%t(y_3) \\
%t'(z_3)
%\end{cases}$};
\node [above right] at (7.5,5) {$t(y_3)$};
\node [below right] at (7.5,5) {$t'(z_3)$};
%tokanuoli
\draw [->, thick, shorten >=3pt] (6,5) to [out=168,in=12] node [sloped, above] {$\e$} (1.5,5);

\draw[fill]  (6,5) circle [radius=0.03];

%\node [ left] at (1.55,4.97) {$\begin{rcases*}
%s(y_3) \\
%s_0(z_3)
%\end{rcases*}$};

\node[above left] at (1.5,5) {$s(y_3) $};
\node[below left] at (1.5,5) {$s_0(z_3) $};

\draw[fill]  (1.5,5) circle [radius=0.03];

%KESKIOSA
\draw[fill]  (1.2,3) circle [radius=0.03];
%ekanuoli
\draw [->, thick, shorten >=3pt] (1.2,3) to [out=20,in=160] node [sloped, above] {$f$} (2.7,3);
\draw[fill]  (2.7,3) circle [radius=0.03];

%\node [left] at (1.25,2.97) {$\begin{rcases*}
%t(y_2) \\
%t'(z_2)
%\end{rcases*}$};

\node[above left] at (1.2,3) {$t(y_2)$};
\node[below left] at (1.2,3) {$t'(z_2)$};

\node [below] at (2.7,2.95) {$s_0(z_2)$};

%tokanuoli
\draw [->, thick, shorten >=3pt] (2.7,3) to [out=15,in=165] node [sloped, above] {$\e$} (6.7,3);

\node [above] at (6.7,3.1) {$s(y_2)$};

\draw[fill]  (6.7,3) circle [radius=0.03];

%ALAOSA
\draw[fill]  (1.6,1) circle [radius=0.03];
%ekanuoli
\draw [->, thick, shorten >=3pt] (1.6,1) to [out=20,in=160] node [sloped, above] {$f$} (2.9,1);
\draw[fill]  (2.9,1) circle [radius=0.03];

%\node [left] at (1.65,0.97) {$\begin{rcases*}
%t(y_2) \\
%t'(z_2)
%\end{rcases*}$};

\node [above left] at (1.6,1) {$t(y_2)$};
\node [below left] at (1.6,1) {$t'(z_2)$};

%\node [right] at (2.9,0.97) {$\begin{cases}
%s(y_3) \\
%s_0(z_3)
%\end{cases}$};
\node [above right] at (2.9,1) {$s(y_3)$};
\node [below right] at (2.9,1) {$s_0(z_3)$};

\end{tikzpicture}

\caption{} \label{kuva2}
\end{figure}

\begin{figure}[h]
\center
\begin{tikzpicture}[xscale=1.1,yscale=1.3 ]

\draw [<->, very thick, shorten >=.3pt]  (9.3,6) to node [sloped, above] {$s=\e\circ f\circ t$} (9.3,2.2);
\draw [<->, very thick, shorten <=.3pt]  (9.3,2.2) to node [sloped, above] {$s= f\circ t$} (9.3,0);

\draw [<->, very thick, shorten >=.3pt]  (9.8,6) to node [sloped, above] {$s'=\e \circ f_a\circ  f\circ t'$} (9.8,4);
\draw [<->, very thick, shorten <=.3pt]  (9.8,4) to node [sloped, above] {$s'=  f_a\circ f\circ t'$} (9.8,0);

%tuple e(b)
\draw  (4,6) -- (0,6);
%\node [above] at (2,6) {$\e(\tuple b)$};
%tuple a
\draw  (4,0) -- (0,0);
%\node [below] at (2,0) {$\tuple a$};

%tuple b
 \draw  (5,6) -- (9,6);
%\node [above] at (7,6) {$\tuple b$};

%apulinjat
\draw (0,0) -- (0,6);
\draw (4,0) -- (4,6);
\draw  (5,6) -- (5,0) -- (9,0) -- (9,6) ;

%M-linja
\draw [ loosely dotted][very thick] (0,4) -- (4,4);
\draw [loosely dotted][very thick]  (5,4) -- (9,4) ;

%N-linja
\draw [loosely dotted][very thick]  (0,2.2) -- (4,2.2) ;
\draw [loosely dotted][very thick]  (5,2.2) -- (9,2.2) ;

%%selitys
\node [right] at (1,-1) {$\begin{cases}
M':=\mi{\row{t' (\tuple x) } } \\
M:= \mi{\row{t (\tuple x) } }
\end{cases}$};

\node [right] at (5,-1) {$\begin{cases}
s= h(f\circ t) \\
s':= h(f_a \circ f \circ t') \textrm{, for }a := f\circ t'(z_2)
\end{cases}$};

%mid
\node [left] at (0,4) {$M'$ };
\node [left] at (0,2.2) {$M$ };
%YLÄOSA
\draw[fill]  (7.5,5) circle [radius=0.03];
%ekanuoli
\draw [->, thick,  shorten >=5pt] (7.5,5) to [out=160, in=20] node [sloped, above] {$f$} (6,5);

\draw [->, thick,  shorten >=5pt] (7.5,5) to [out=160, in=20] node [sloped, above] {$f$} (6,5);

%\node [right] at (7.5,4.97) {$\begin{cases}
%t(y_3) \\
%t'(z_3)
%\end{cases}$};

\node [above right] at (7.5,5) {$t(y_3)$};
\node [below right] at (7.5,5) {$t'(z_3)$};

\draw[->] [thick, shorten >=4pt] (6,5)  to [my loops] node [sloped, above] {$f_{a}$}  (6,5);

%tokanuoli
\draw [->, thick, shorten >=3pt] (6,5) to [out=168,in=12] node [sloped, above] {$\e$} (1.5,5);

\draw[fill]  (6,5) circle [radius=0.03];

%\node [left] at (1.55,4.97) {$\begin{rcases*}
%s(y_3) \\
%s'(z_3)
%\end{rcases*}$};

\node [above left] at (1.5,5) {$s(y_3)$};
\node [below left] at (1.5,5) {$s'(z_3)$};

\draw[fill]  (1.5,5) circle [radius=0.03];

%KESKIOSA
\draw[fill]  (1.2,3) circle [radius=0.03];
%ekanuoli
\draw [->, thick, shorten >=3pt] (1.2,3) to [out=20,in=160] node [sloped, above] {$f$} (2.7,3);
\draw[fill]  (2.7,3) circle [radius=0.03];

%\node [left] at (1.25,2.97) {$\begin{rcases*}
%t(y_2) \\
%t'(z_2)
%\end{rcases*}$};

\node [above left] at (1.2,3) {$t(y_2)$};
\node [below left] at (1.2,3) {$t'(z_2)$};

%\node [below] at (2.7,2.9) {$\overbrace{
%s(y_2),
%s'(z_2)}$};

\node [above] at (6.7,3.1) {$s(y_2)$};
\node [below] at (6.7,2.9) {$s'(z_2)$};

%tokanuoli
\draw [->, thick, shorten >=4pt] (2.7,3) to [out=15,in=165] node [sloped, above] {$\e$} (6.7,3);

\draw [->, thick, shorten >=4pt] (2.7,3) to [out=345,in=195] node [sloped, above] {$f_a$} (6.7,3);

\draw[fill]  (6.7,3) circle [radius=0.03];

%ALAOSA
\draw[fill]  (1.6,1) circle [radius=0.03];
%ekanuoli
\draw [->, thick, shorten >=3pt] (1.6,1) to [out=20,in=160] node [sloped, above] {$f$} (2.9,1);
\draw[fill]  (2.9,1) circle [radius=0.03];

%\node [left] at (1.65,0.97) {$\begin{rcases*}
%t(y_2) \\
%t'(z_2)
%\end{rcases*}$};

\node [above left] at (1.6,1) {$t(y_2)$};
\node [below left] at (1.6,1) {$t'(z_2)$};

\node [above] at (2.9,1.2) {$s(y_3)$};
\node [below] at (2.9,.8) {$s'(z_3)$};

\draw[->] [thick, shorten >=3pt] (2.9,1)  to [my loopss] node [ right] {$f_a$}  (2.9,1);

\end{tikzpicture}
\caption{} \label{kuva3}
\end{figure}

Since $|I| \leq k-1$, choosing $\tuple a := (f\circ t'(z_{i}))_{i \in I}$ we find by item 4 of Theorem \ref{grohethm} an automorphism $f_{\tuple a}$ that swaps $f\circ t'(z_{i})$ to $\e \circ f \circ t'(z_{i})$, for each $i \in I$, but leaves all elementes in rows of distance $>1$ from $ (\row{f\circ t'(z_{i})}  )_{i \in I}$ fixed. We now let $s' := h( f_{\tuple a}\circ f\circ t')$. Since  
$$1 <  \mi{\row{t(\tuple x)}},\mi{\row{t'(\tuple x)}} < n$$
by the definition, we have $2 < \row{t'(z_i)} < n-1$,
for $i \in I$. Hence $f_{\tuple a} \in \F$ and $s' \in Y^*$. Moreover, for $i \in I$, we obtain that
\begin{align*}
\textrm{(i)}&\hspace{2mm}s(y_{i})=f\circ t(y_{i}) =f\circ t'(z_{i}) = \e \circ f_{\tuple a} \circ f\circ t'(z_{i})
=s'(z_{i}), \textrm{ or }\\
\textrm{(ii)}&\hspace{2mm}s(y_{i})=\e\circ f \circ t(y_{i}) = \e\circ f \circ t'(z_{i}) =
f_{\tuple a} \circ f\circ t'(z_{i})=s'(z_{i}).
\end{align*}
For the first and last equalities note that $f_{a}$ and $f$ preserve the rows. For (i) recall also that $\e$ is self-inverse.

Let then $1 \leq j \leq l$ be such that $j \not\in I$ when both (i) and (ii) and fail for $j$. Then we obtain
\begin{align}
&\row{t(y_{j})}>\mi{\row{t(\tuple x)}} \textrm{ and } \row{t'(z_j)} >\mi{\row{t'(\tuple x)}},\textrm{ or }\label{eka}\\
&\row{t(y_{j})}<\mi{\row{t(\tuple x)}} \textrm{ and } \row{t'(z_j)} <\mi{\row{t'(\tuple x)}}. \label{toka}
\end{align}
Assume first that \eqref{eka} holds and let  $i\in I$. Then either
\begin{align*}
\textrm{(i) }&\hspace{2mm}\row{t(y_i)} <\mi{\row{t(\tuple x)}}<\row{t(y_j)} \textrm{, or}\\
\textrm{(ii) }&\hspace{2mm}\row{t'(z_i)} <\mi{\row{t'(\tuple x)}}<\row{t'(z_j)}.
\end{align*}
Since $t(y_j)=t'(z_j)$, $t(y_i)=t'(z_i)$, and $f$ preserves the rows, in both cases we conclude that
$$|\row{f\circ t'(z_{j})} - \row{f\circ t'(z_{i})}| >1.$$
Therefore $f_{\tuple a}$ leaves $f\circ t'(z_j)$ fixed. By \eqref{eka} we now have
$$s(y_{j}) = \e \circ f \circ t (y_j) = \e \circ f \circ t'(z_j)=\e\circ f_{\tuple a} \circ f \circ t'(z_j) = s'(z_j).$$
The case where \eqref{toka} holds is analogous. Hence $s(\tuple y) = s'(\tuple z)$. This concludes the case of inclusion atom and thus the proof of $(\A, \tuple b, \e(\tuple c)) \models_{X^*} \theta$.
\end{itemize}
We have now concluded item \ref{neljäs} of the outline of the proof. Next we will show the last part of the proof. That is, we will show that $X^*$ can be constructed by quantifying $Q^1 x_1 \ldots Q^m x_m$ in $\A$ over $\{\emptyset\}$. For this, it suffices to show the following claim.
\begin{claim}
Let $a \in A$, $p \in \{1, \ldots ,m\}$ be such that $Q^p = \forall$, and $s \in X^* \upharpoonright \{x_1, \ldots ,x_{p-1}\}$. Then $s(a/x_p) \in  X^* \upharpoonright \{x_1, \ldots ,x_p\}$.
\end{claim}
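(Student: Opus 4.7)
The plan is to fix $f \in \mathcal{F}$ and $t \in X$ with $s = h(f \circ t) \upharpoonright \{x_1, \ldots, x_{p-1}\}$, and then to construct some $t' \in X$ agreeing with $t$ on the first $p-1$ coordinates, such that $h(f \circ t') \upharpoonright \{x_1, \ldots, x_p\} = s(a/x_p)$. Once such a $t'$ is found, $h(f \circ t') \in X^*$ witnesses that $s(a/x_p) \in X^* \upharpoonright \{x_1, \ldots, x_p\}$. The freedom needed to choose the value at position $p$ comes precisely from the hypothesis $Q^p = \forall$, which forces $F_p$ to be the constant function with value $A$; for the remaining coordinates $x_{p+1}, \ldots, x_m$ the non-emptiness of $F_{p+1}, \ldots, F_m$ provides any extension.

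The heart of the argument is the choice of $t'(x_p)$. There are two natural candidates: $b_A := f^{-1}(a)$, for which $h(f \circ t')(x_p) = a$ precisely when $\row{a} < \mi{\row{f\circ t'(\tuple x)}}$, and $b_B := f^{-1}(\e(a))$, for which $h(f \circ t')(x_p) = \e(\e(a)) = a$ precisely when $\row{a} > \mi{\row{f\circ t'(\tuple x)}}$. In both cases $\row{f\circ t'(x_p)} = \row{a}$, since $f$ and $\e$ are row-preserving; and condition 2 of $\mi$ rules out the equality $\row{a} = \mi{\row{f\circ t'(\tuple x)}}$, so exactly one of the two cases will produce the desired value at $x_p$.

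The subtlety I expect to be the main obstacle is that switching between the two candidates changes the extension to $i > p$ and, in principle, the value of $\mi{\row{f\circ t'(\tuple x)}}$ against which $\row{a}$ must be compared, so it is not immediately clear that a single calculation determines the right candidate. This is resolved by condition 3 of $\mi$: the two candidate row sequences agree on positions $1, \ldots, p$ (they agree on the first $p-1$ positions by construction, and the $p$-th entry is $\row{a}$ in both), so applying condition 3 with $l = p$ shows that $\row{a}$ sits on the same side of the respective mid in both cases. One may therefore commit to the $b_A$-extension, compute the mid, and either keep the $b_A$-choice (if $\row{a}$ is below) or switch to a $b_B$-extension (if above), and the target $h(f \circ t')(x_p) = a$ is achieved. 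A final application of condition 3 with $l = p-1$ then handles the coordinates $i < p$: the sequences $\row{f \circ t(\tuple x)}$ and $\row{f \circ t'(\tuple x)}$ agree on $\{1, \ldots, p-1\}$ by row-preservation of $f$ and the agreement of $t$ and $t'$ there, so $h$ applies the same operation (identity or $\e$) at each such position; since $f\circ t(x_i) = f\circ t'(x_i)$ for $i < p$, the images coincide, completing $h(f \circ t') \upharpoonright \{x_1, \ldots, x_p\} = s(a/x_p)$.
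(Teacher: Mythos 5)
Your proposal is correct and takes essentially the same route as the paper: the paper likewise considers the two candidates $f^{-1}(a)$ and $f^{-1}\circ\e(a)$, extends them to assignments $t_0,t_1\in X$ (using $Q^p=\forall$ and non-emptiness of the later $F_i$), and uses condition 3 of $\mathrm{mid}$ at levels $p$ and $p-1$ together with row-preservation of $f$ and $\e$ and condition 2 to show that one of $h(f\circ t_0)$, $h(f\circ t_1)$ extends $s(a/x_p)$. Your "commit to one candidate and switch if needed" phrasing is just a cosmetic repackaging of the paper's "one of the two works" argument.
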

\begin{proof}[Claim]
Let $a$, $p$ and $s$ be as in the assumption. Then 
$$s=h(f \circ t) \upharpoonright  \{x_1, \ldots ,x_{p-1}\},$$
for some $f\in \F$ and $t \in X$. Let $a_0 = f^{-1}(a)$ and $a_1 =f^{-1}\circ \e(a)$. Note that both $t(a_0/x_p) \upharpoonright \{x_1, \ldots ,x_p\}$ and $t(a_1/x_p) \upharpoonright \{x_1, \ldots ,x_p\} $ are in $X \upharpoonright \{x_1, \ldots ,x_p\}$ since $Q^p = \forall$. Let $t_0,t_1 \in X$ extend $t(a_0/x_p) \upharpoonright \{x_1, \ldots ,x_p\}$ and $t(a_1/x_p) \upharpoonright \{x_1, \ldots ,x_p\} $, respectively. It suffices to show that either $h(f \circ t_0)$ or $h(f \circ t_1)$ (which both are in $X^*$) extend $s(a/x_p)$.

First note that since 
$$t_0 \upharpoonright \{x_1, \ldots ,x_{p-1}\} = t_1 \upharpoonright \{x_1, \ldots ,x_{p-1}\} = t \upharpoonright \{x_1, \ldots ,x_{p-1}\}$$ 
we have by item 3 of the definition of $\textrm{mid}$ that, for $i \leq p-1$, inequalities \eqref{1}, \eqref{2} and \eqref{3} are equivalent:
\begin{align}
\row{t_0(x_i)} &< \mi{\row{t_0(\tuple x)}},\label{1}\\
\row{t_1(x_i)} &< \mi{\row{t_1(\tuple x)}},\label{2}\\
\row{t(x_i)} &< \mi{\row{t(\tuple x)}}.\label{3}
\end{align}
Since also $f$ preserves the rows, we have by the definition of $h$ that $h(f \circ t_0)$, $h(f \circ t_1)$ and $h(f \circ t)$ all agree in variables $x_1, \ldots ,x_{p-1}$. Note that also $\e$ preserves the rows, so have $\row{a_0} = \row {a_1}$. Since then $\row{t_0(x_i)} = \row{t_1(x_i)}$, for $i \leq p$, we have by item 3 of the definition of $\textrm{mid}$ that 
$$\row{t_0(x_p)} < \mi{\row{t_0(\tuple x)}} \textrm{ iff }\row{t_1(x_p)} < \mi{\row{t_1(\tuple x)}}.$$
Therefore, either 
$$\row{t_0(x_p)} < \mi{\row{t_0(\tuple x)}} \textrm{ or }\row{t_1(x_p)} > \mi{\row{t_1(\tuple x)}}.$$
Then in the first case $h(f\circ t_0)(x_p) = f\circ t_0(x_p)= a$, and in the second case $h(f \circ t_1)(x_p) = \e \circ f \circ t_1(x_p)= \e\circ \e(a)=a$. Hence $s(a/x_p) \in  X^* \upharpoonright \{x_1, \ldots ,x_p\}$. This concludes the proof of the claim. $\hfill\blacksquare$
\end{proof}
We have now showed that $X^*$ can be constructed by quantifying $Q^1 x_1 \ldots Q^m x_m$ in $\A$ over $\{\emptyset\}$. Also previously we showed that $(\A, \tuple b , \e(\tuple c)) \models_{X^*} \theta$. Therefore, since $\phi = Q^1 x_1 \ldots Q^m x_m \theta$, we obtain that $(\A, \tuple b , \e(\tuple c)) \models \phi$. %But now also by Theorem \ref{grohethm}, we have $\A \not\models \neg [TC_{\tuple x , \tuple y} \EDGE{k}]  (\tuple a , \e(\tuple b))$. 
Hence the counter-assumption that $\phi(\tuple b,\tuple c)$ defines $\neg[TC_{\tuple x , \tuple y} \EDGE{k}](\tuple b , \tuple c)$ is false. 
%\begin{equation}\label{counter}
%\phi(\tuple a ,\tuple b) \equiv \neg[TC_{\tuple x , \tuple y} \EDGE{k}](\tuple a , \tuple b),
%\end{equation}
%for $\phi(\tuple a ,\tuple b) \in \inc{k-1}$, is false. 
Otherwise $\A \models \neg [TC_{\tuple x , \tuple y} \EDGE{k}]  (\tuple b , \tuple c) $ would yield $(\A, \tuple b , \tuple c) \models \phi$ from which $(\A, \tuple b , \e(\tuple c)) \models \phi$ follows. Therefore we would obtain $\A \models \neg [TC_{\tuple x , \tuple y} \EDGE{k}]  (\tuple b , \e(\tuple c))$ which contradicts with the fact that $\A \not\models \neg [TC_{\tuple x , \tuple y} \EDGE{k}]  (\tuple b , \e(\tuple c))$ by Theorem \ref{grohethm}. This concludes the proof of Lemma \ref{main}.\qed
\end{proof}
Theorem \ref{hierarkia} follows now from Theorem \ref{pietron} and Lemma \ref{main}.

\section{Conclusion}
We have showed that the arity fragments of inclusion logic give rise to an infinite expressivity hierarchy. Earlier, analogous results have been proved for dependence logic and independence logic. We also observed that the $\incforall{k}$-hierarchy collapses at a very low level as it is the case with the $\indforall{k}$-hierarchy. However, the $\depforall{k}$-hierarchy is strict since it can be related to the strict $\ESOfvar{k}$-hierarchy. From the results of \cite{durand12}, \cite{galhankon13} and this article, we obtain the following classification for syntactical hierarchies of dependence, independence and inclusion logic under the lax semantics.
\begin{center}
    \begin{tabular}{ | p{2cm} | p{4,5cm} | p{4,5cm}  |}
    \hline
     & Arity of Dependency Atom & Number of $\forall$  \\ \hline
    $\deplogic$ & strict \newline $\deparity{k}< \newline \deparity{k+1}$ & strict \newline $\depforall{k} < \newline\depforall{2k+2}$  \\[10ex] \hline
    $\indlogic$ & strict \newline $\ind{k} <\newline \ind{k+1}$ & collapse at 2 \newline $\indforall{2} = \indlogic$  \\[10ex] \hline
    $\inclogic$ & \textbf{strict}\newline \textbf{$\inc{k} < \newline\inc{k+1}$} &  \textbf{collapse at 1}\newline \textbf{$\incforall{1} = \inclogic$} \\[10ex]
    \hline
    \end{tabular}
\end{center}

Since $\inclogic$ captures $\PTIME$ in  finite ordered models, it would be interesting to investigate syntactical fragments of inclusion logic in that setting. It appears that then the techniques used in this article would be of no use. Namely, we cannot hope to construct two ordered models in the style of Theorem \ref{grohethm}. 
In fixed point logics, this same question has been studied in the 90s. Imhof showed in \cite{imhof96} that the arity hierarchy of $\PFP$ remains strict in ordered models ($\PFPary{k} <_{\mathcal{O}}\PFPary{k+1}$) by relating the $\PFPary{k}$-fragments to the degree hierarchy within $\PSPACE$. For $\LFP$ and $\IFP$, the same question appears to be more difficult, since both strictness and collapse have strong complexity theoretical consequences.
\begin{theorem}[\cite{imhof96}]
For both $\IFP$ and $\LFP$,
collapse of arity hierarchy in ordered models implies $\PTIME < \PSPACE$, strictness implies $\LOGSPACE < \PTIME$.
\end{theorem}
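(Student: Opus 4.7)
The plan is to relate the arity of the fixed point operator on ordered finite models to well-understood computational resource measures, and then translate collapse/strictness of the arity hierarchy into statements about the corresponding complexity classes. Since $\LFP$ and $\IFP$ both capture $\PTIME$ on ordered finite models, the key is to identify what bounded-arity fragments capture.

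For the collapse direction I would first prove an upper bound: $\IFPary{k}$ (and $\LFPary{k}$) on ordered models is contained in $\DTIME(n^{p(k)})$ for some polynomial $p$. This follows from the standard stage-comparison analysis: a $k$-ary inductive definition stabilizes after at most $n^k$ stages, each stage computable in $n^{O(1)}$ time, with nested quantifiers and nested operators contributing only polynomially in the formula size. If the arity hierarchy collapses at level $k_0$, then $\PTIME = \IFP = \IFPary{k_0} \subseteq \DTIME(n^{p(k_0)})$. If we had in addition $\PTIME = \PSPACE$, then $\PSPACE \subseteq \DTIME(n^{p(k_0)}) \subseteq \DSPACE(n^{p(k_0)})$, contradicting the deterministic space hierarchy theorem ($\PSPACE = \bigcup_k \DSPACE(n^k)$ is a strict union). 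Contrapositively, collapse forces $\PTIME < \PSPACE$.

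For the strictness direction I would prove the converse capture: any $\LOGSPACE$-computable query on ordered models is expressible by an $\IFP$ (and indeed $\TC$) sentence with fixed-arity fixed point operators. The idea is that a logspace machine on input of size $n$ has at most $n^{O(1)}$ configurations, each encodable by an $r$-tuple of domain elements for some constant $r$ depending on the machine but not on $n$. Reachability in this configuration graph is then a $\TC$ query of a fixed arity $r$, hence is in $\IFPary{r}$. Consequently, $\LOGSPACE \subseteq \bigcup_r \IFPary{r}$ with $r$ independent of the input; if we assume $\LOGSPACE = \PTIME$, then the full $\IFP = \PTIME$ would be absorbed into some $\IFPary{r_0}$, collapsing the arity hierarchy. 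Contrapositively, strictness implies $\LOGSPACE < \PTIME$. The same argument works for $\LFP$ since $\LFP \equiv \IFP$ over ordered models.

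The main obstacle is the converse capture in the strictness argument: one has to show carefully that the \emph{arity} needed to encode a configuration of a logspace machine, including work tape contents and head positions, can be bounded by a constant independent of $n$, and that this encoding goes through uniformly in the signature. Standard simulations in the literature give this, but verifying that nested $\IFP$ operators inside a first-order context do not secretly inflate the arity requires the normal form / simultaneous induction machinery. The upper bound $\IFPary{k} \subseteq \DTIME(n^{p(k)})$ used in the collapse direction is more routine, amounting to careful bookkeeping of stages and of the degree of the polynomial evaluating each stage.
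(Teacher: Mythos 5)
Both halves of your argument have genuine gaps, and in each case the gap sits exactly where the real content of Imhof's proof lies. In the collapse direction, your key lemma --- that $\IFPary{k} \subseteq \mathrm{DTIME}(n^{p(k)})$ with the exponent depending \emph{only} on the arity $k$ --- is unjustified and almost certainly false. The arity of the fixed-point operators bounds the number of stages (at most $n^k$ per operator) and the size of each stage relation, but it does not bound the cost of evaluating the first-order parts of the formula: a first-order formula with $w$ variables takes time $n^{\Theta(w)}$ to evaluate, and $w$ is unbounded within $\IFPary{k}$. Already the fixed-point-free fragment $\FO \subseteq \IFPary{k}$ contains, for every fixed $h$, the $h$-clique query, and placing all of these in a single class $\mathrm{DTIME}(n^{c})$ is far beyond any ``stage-comparison bookkeeping.'' Your lemma also proves too much: combined with collapse at $k_0$ it gives $\PTIME =_{\mathcal{O}} \IFPary{k_0} \subseteq \mathrm{DTIME}(n^{p(k_0)})$, which (since every $\PTIME$ language is a $\PTIME$ query on ordered string structures) contradicts the deterministic time hierarchy theorem outright, with no need for your extra hypothesis $\PTIME = \PSPACE$; so a proof of your lemma would establish \emph{unconditional} strictness of the hierarchy, an open problem. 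Imhof's argument, sketched in the paper, avoids exactly this trap by trading time for space: $\PTIME =_{\mathcal{O}} \IFPary{k} \leq \PFPary{k} <_{\mathcal{O}} \PFP =_{\mathcal{O}} \PSPACE$. The middle step (strictness of the $\PFP$ arity hierarchy on ordered models, proved via the degree hierarchy inside $\PSPACE$) is sound precisely because each additional first-order variable costs only an additive $O(\log n)$ in space, whereas in your time analysis it costs a multiplicative factor of $n$ in the running time.

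In the strictness direction the gap is uniformity. Your simulation gives, for each machine running in space $c \log n$, an arity $r(c)$ such that the query it computes lies in $\TCary{r(c)} \subseteq \IFPary{r(c)}$ --- but the arity depends on the machine. Hence all you obtain is $\LOGSPACE \subseteq \bigcup_{r} \IFPary{r} = \IFP$, and under the assumption $\LOGSPACE = \PTIME$ this reads $\PTIME \subseteq \IFP$, which is trivially true and contradicts nothing. The step ``the full $\IFP = \PTIME$ would be absorbed into some $\IFPary{r_0}$'' is a non sequitur: no single $r_0$ works for all machines, so no collapse at a fixed level follows. What the argument needs is a fixed-arity bound covering \emph{all} of $\LOGSPACE$ at once, and that is precisely the nontrivial ingredient in the chain the paper quotes: $\LOGSPACE =_{\mathcal{O}} \mathrm{DTC} \leq_{\mathcal{O}} \IFPary{1}$. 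Given that, strictness yields $\IFPary{1} <_{\mathcal{O}} \IFP =_{\mathcal{O}} \PTIME$, hence a $\PTIME$ query outside $\LOGSPACE$. Finally, your closing remark that the $\LFP$ case follows ``since $\LFP \equiv \IFP$ over ordered models'' is too quick for the same reason: that equivalence need not preserve arity. The paper instead uses the arity-sensitive translations $\LFPary{k} \leq \IFPary{k}$ and $\IFPary{k} \leq_{\mathcal{O}} \LFPary{2k}$, which is what lets the conditional statements transfer between the two logics level by level.
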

\begin{proof}
Sketch. For $\IFP$, in case of collapse at $k$, the following chain of (in)equalities can be proved.
$$\PTIME =_{\mathcal{O}}  \IFPary{k} \leq  \PFPary{k} <_{\mathcal{O}} \PFP =_{\mathcal{O}} \rm PSPACE.$$
For IFP, strictness implies
$$\rm LOGSPACE =_{\mathcal{O}} \rm DTC \leq_{\mathcal{O}} \IFPary{1} <_{\mathcal{O}} \IFP =_{\mathcal{O}} \rm PTIME. $$
For $\LFP$ the claim now follows from $\LFPary{k} \leq \IFPary{k}$ and $\IFPary{k} \leq_{\mathcal{O}} \LFPary{2k}$.
\end{proof}
It might be possible to prove similar results for inclusion logic by relating the fragments $\inc{k}$ to arity fragments of fixed point logics. However, the translations between $\inclogic$ and $\GFP$ provided in \cite{gallhella13} do not respect arities. It remains open whether collapse or strictness of the $\inc{k}$-hierarchy have such strong consequences or whether it is possible to relate the $\inc{k}$-fragments in ordered models to the degree hierarchy within $\PTIME$? Another line  would be to find some other syntactical parameter that would fit for this purpose.

\bibliographystyle{plain}
\bibliography{biblio}
\end{document}